\newtheorem{lemm}{Lemma}[section]
\newtheorem{theo}{Theorem}[section]
\newtheorem{prop}{Proposition}[section]
\theoremstyle{definition}
\newtheorem{Defi}{Definition}[section]
\newtheorem{obs}{Remark}[section]
\numberwithin{equation}{section}
\begin{document}

\title{On radial stationary solutions to a model of nonequilibrium growth}

\bigskip

\author{Carlos Escudero\footnote{Supported by projects MTM2010-18128, RYC-2011-09025 and SEV-2011-0087.}
\\ [4mm] {\small Departamento de Matem\'aticas
\& ICMAT (CSIC-UAM-UC3M-UCM),}
\\{\small Universidad Aut\'onoma de Madrid, E-28049 Madrid, Spain}\\
[4mm] Robert Hakl\footnote{Supported by project RUO:67985840.}\\
[4mm] {\small Institute of Mathematics AS CR, \v Zi\v zkova 22,
616 62 Brno, Czech Republic}\\ [4mm]
Ireneo Peral\footnote{Supported by project MTM2010-18128.}\\
[4mm] {\small Departamento de Matem\'aticas,} \\{\small Universidad
Aut\'onoma de Madrid, E-28049 Madrid, Spain} \\ [4mm] Pedro J.
Torres\footnote{Supported by project MTM2011-23652.}\\
[4mm] {\small Departamento de Matem\'atica Aplicada,}
\\ {\small Universidad de Granada, E-18071 Granada, Spain.}
}

\date{}

\maketitle

\begin{abstract}
We present the formal geometric derivation of a nonequilibrium
growth model that takes the form of a parabolic partial
differential equation. Subsequently, we study its stationary
radial solutions by means of variational techniques. Our
results depend on the size of a parameter that plays the role of
the strength of forcing. For small forcing we prove the existence
and multiplicity of solutions to the elliptic problem.
We discuss our results in the context
of nonequilibrium statistical mechanics.
\end{abstract}

\section{Introduction}

Epitaxial growth is characterized by the deposition of new
material on existing layers of the same material under high vacuum
conditions. This technique is used in the semiconductor industry
for the growth of thin films~\cite{barabasi}. The crystals grown
may be composed of a pure chemical element like silicon or
germanium, or may either be an alloy like gallium arsenide or
indium phosphide. In case of molecular beam epitaxy the deposition
takes place at a very slow rate and almost atom by atom. The goal
in most situations of thin film growth is growing an ordered
crystal structure with flat surface. But in epitaxial growth it is
quite usual finding a mounded structure generated along the
surface evolution~\cite{lengel}. The actual origin of this mounded
structure is to a large extent unknown, although some mechanisms
(like energy barriers) have already been proposed. Attempting to
perform \emph{ab initio} quantum mechanical calculations in this
system is computationally too demanding, what opens the way to the
introduction of simplified models. These have been usually
developed within the realm of non-equilibrium statistical
mechanics, and can be of a discrete probabilistic nature or have
the form of a differential equation~\cite{barabasi}. Discrete
models usually represent adatoms (the atoms deposited on the
surfaces) as occupying lattice sites. They are placed randomly at
one such site and then they are allowed to move according to some
rules which characterize the different models. A different
modelling possibility is using partial differential equations,
which in this field are frequently provided with stochastic
forcing terms. In this work we will focus on rigorous and
numerical analyses of ordinary differential equations related to
models which have been introduced in the context of epitaxial
growth. We hope that a systematic mathematical study will
contribute to the understanding of this sort of processes, which
are relevant both in pure physics and its industrial applications,
in the long term.

The mathematical description of epitaxial growth uses the function
\begin{equation}
u: \Omega \subset \mathbb{R}^2 \times \mathbb{R}^+ \rightarrow
\mathbb{R},
\end{equation}
which describes the height of the growing interface in the spatial
point $x \in \Omega \subset \mathbb{R}^2$ at time $t \in
\mathbb{R}^+$. Although this theoretical framework can be extended
to any spatial dimension $N$, we will concentrate here on the
physical situation $N=2$. A basic modelling assumption is of
course that $u$ is an univalued function, a fact that holds in a
reasonably large number of cases~\cite{barabasi}. The macroscopic
description of the growing interface is given by a partial
differential equation for $u$ which is usually postulated using
phenomenological and symmetry arguments~\cite{barabasi,marsili}. A
prominent example of such a theory is given by the
Kardar-Parisi-Zhang equation~\cite{kpz}
\begin{equation}
u_t = \nu \Delta u + \gamma |\nabla u|^2 + \eta(x,t),
\end{equation}
which has been extensively studied in the physical literature and
it is currently being investigated for its interesting
mathematical properties~\cite{ireneo1,ireneo2}. It has been argued
however that epitaxial growth processes should be described by
some equation coming from a conservation law and, in particular, that
the term $|\nabla u|^2$ should not be present in such an equation~\cite{barabasi}. To this end,
among others, the conservative counterpart of the Kardar-Parisi-Zhang
equation was introduced~\cite{sun,villain,vlds}
\begin{equation}
\label{ssg} u_t = -\mu \Delta^2 u + \kappa \Delta |\nabla u|^2 +
\zeta(x,t).
\end{equation}
This equation is conservative in the sense that the first moment
$\int_\Omega u \, dx$ is constant if the appropriate boundary
conditions are used. It can be considered as a higher order
counterpart of the Kardar-Parisi-Zhang equation, and it poses as
well a number of fundamental mathematical
questions~\cite{dirk1,dirk2,dirk3}.

In this work we will focus on a variation of the last equation.
Its formal derivation will be presented in the following section.
The remainder of this work will be devoted to clarify the
analytical properties of the radial stationary solutions to the
model under consideration.

\section{Formal derivation of the model}

Herein we will adopt a variational formulation of the surface
growth equation, which has been postulated as a simple and yet
physically relevant way of developing growth
models~\cite{marsili}. In order to proceed with our formal
derivation, we will assume that the height function obeys a
gradient flow equation with a forcing term
\begin{equation}
u_t= \sqrt{1 + (\nabla u)^2} \left[ - \frac{\delta
\mathcal{J}(u)}{\delta u} + \xi(x,t) \right].
\end{equation}
The functional $\mathcal{J}$ denotes a potential which describes
the microscopic properties of the interface and, at the
macroscopic scale, it is assumed that it can be expressed as a
function of the surface mean curvature only~\cite{marsili}
\begin{equation}
\mathcal{J}(u)= \int_\Omega F(H) \, \sqrt{1 + (\nabla u)^2} \,\,
dx,
\end{equation}
where the presence of the square root terms models growth along
the normal to the surface, $H$ denotes the mean curvature and $F$
is an unknown function of $H$. We will furthermore assume that
this function can be expanded in a power series
\begin{equation}
\label{expansion} F(H)= K_0+K_1 H + \frac{K_2}{2} H^2 +
\frac{K_3}{6} H^3 + \cdots ,
\end{equation}
and subsequently formally apply the small gradient expansion,
which assumes $|\nabla u| \ll 1$. This is a classical
approximation in this physical context~\cite{marsili} and it is
basic in the derivation of the Kardar-Parisi-Zhang
equation~\cite{kpz} among others. In the resulting equation, only
linear and quadratic terms in the field $u$ and its derivatives
are retained, as higher order nonlinearities are assumed not to be
relevant in the large scale description of a growing
interface~\cite{barabasi}. The final result reads
\begin{equation}
\label{parabolic} u_t = K_0 \, \Delta u + 2 \, K_1 \, \det \left(
D^2 u \right) - K_2 \, \Delta^2 u - \frac{1}{2} \, K_3 \, \Delta
\left( \Delta u \right)^2 + \xi(x,t),
\end{equation}
which is, as well as~(\ref{ssg}), a conservative equation in the
sense that $\int_\Omega u \, dx$ is constant if appropriate
boundary conditions are used. We note that powers of the mean
curvature higher than the cubic one in expansion~(\ref{expansion})
do not contribute to equation~(\ref{parabolic}) as they imply
cubic or higher nonlinearities of the field $u$ or its
derivatives. The terms in equation~(\ref{parabolic}) have a clear
geometrical meaning. The term proportional to $K_0$ is the result
of the minimization of the zeroth order of the mean curvature,
that is, it corresponds to the minimization of the surface area.
Its functional form simply reduces to standard diffusion. The term
proportional to $K_1$ comes from the minimization of the mean
curvature and actually it is the determinant of the Hessian
matrix, which is nothing but the small gradient approximation of
the surface Gaussian curvature. So we see that, through the small
gradient approximation, \emph{a gradient flow pursuing the
minimization of the mean curvature leads to a evolution which
favors the growth of the Gaussian curvature}. The term
proportional to $K_2$ comes from the minimization of the squared
mean curvature. A functional involving the squared mean curvature
is known as Willmore functional and it has its own status within
differential geometry~\cite{willmore}. The bilaplacian
accompanying $K_2$ is the corresponding linearized Euler-Lagrange
equation of the Willmore functional when looking for flat
minimizers, and it has already appeared in the context of
mathematical elasticity~\cite{hornung}. Finally the term
proportional to $K_3$ comes from the minimization of the cubic
power of the mean curvature and it involves a nonlinear
combination of Laplacians of the field. We note that from a more
puristic geometrical viewpoint one would retain only even powers
of the mean curvature in expansion~(\ref{expansion}), which would
give rise to a symmetric solution to the corresponding
simplification of equation~(\ref{parabolic}) (i. e., a solution
invariant to the transformation $u \to -u$). However, from a
physical viewpoint, we are seeking for a solution to a partial
differential equation which represents the interface between two
different media (solid structure and vacuum in the present case)
so this symmetry is not guaranteed a priori, and we need to retain
the odd powers of the mean curvature in
expansion~(\ref{expansion}).

For our current purposes we will focus on the associated
stationary problem to a simplification of
equation~(\ref{parabolic}). Such an equation can be obtained
employing well known facts from the theory of non-equilibrium
surface growth. We may invoke classical scaling arguments in the
physical literature to disregard the last term as a higher order
correction which will not be present in the description of the
largest scale properties of the evolving surface~\cite{barabasi}.
This practically reduces to setting $K_3=0$ in
equation~(\ref{parabolic}). In epitaxial growth one may
phenomenologically set $K_0=0$, and we will assume so for the rest
of this work. The underlying physical reason is that the diffusion
proportional to $K_0$ is triggered by the effect of gravity on
adatoms, and this effect is negligible in the case of epitaxial
growth~\cite{barabasi}. The resulting equation reads
\begin{equation}
\label{parabolic2} u_t = 2 \, K_1 \, \det \left( D^2 u \right) -
K_2 \, \Delta^2 u + \xi(x,t).
\end{equation}
This partial differential equation can be thought of as been an
analogue of equation~(\ref{ssg}). Indeed, it has been shown that
this equation might constitute a suitable description of epitaxial
growth in the same sense equation (\ref{ssg}) is so, and it even
shows more intuitive geometric properties~\cite{escudero}. So, at
the physical level, we can consider equation~(\ref{parabolic2}) as
a higher order conservative counterpart of the Kardar-Parisi-Zhang
equation. At the mathematical level we can consider it as a sort
of Gaussian curvature flow~\cite{chow,andrews} which is stabilized
by means of a higher order viscosity term. Furthermore, this
viscosity term, as we have seen, has a clear geometrical meaning.
As we explain above, in this work we are concerned with the
stationary version of (\ref{parabolic2}), which reads
\begin{equation}\label{Pro0}
\left\{\begin{array}{rcl} \Delta^2 u&=&\text{det} \left( D^2 u
\right) +\lambda f, \qquad x\in \Omega\subset\mathbb{R}^2, \\
\text{boundary}&\,& \text{ conditions,}
\end{array}
\right.
\end{equation}
after getting rid of the equation constant parameters by means of
a trivial re-scaling of field and coordinates. Our last assumption
is that the forcing term $f=f(x)$ is time independent. This type
of forcing is known in the physical literature as columnar
disorder, and it has an actual experimental meaning within the
context of non-equilibrium statistical mechanics~\cite{hh}. The
constant $\lambda$ is a measure of the intensity of the rate at
which new particles are deposited, and for physical reasons we
assume $\lambda \ge 0$ and $f(x) \ge 0$. We will devote our
efforts to rigorously and numerically clarify the existence and
multiplicity of solutions to this elliptic problem when set on a
radially symmetric domain.

\section{Radial problems}
\label{radialproblems}

\subsection{Dirichlet boundary conditions}

We start looking for radially symmetric solutions of boundary
value problem~(\ref{Pro0}) with $f = f(r)$, where $r$ is the
radial coordinate, and homogeneous Dirichlet boundary conditions.
We set the problem on the unit disk. That is, we look for
solutions of the form $u=\tilde{u}(r)$ where
$$r = \sqrt{x_1^2+x_2^2}.$$
By means of a direct substitution we find
\begin{equation}
\label{fullradial} \frac{1}{r} \left\{ r \left[ \frac{1}{r} \left(
r \tilde{u}' \right)' \right]' \right\}' = \frac{1}{r} \,
\tilde{u}' \tilde{u}'' + \lambda f(r),
\end{equation}
where $'=\frac{d}{dr}$, and the conditions $\tilde{u}'(0)=0$,
$\tilde{u}(1)=0$, $\tilde{u}'(1)=0$, and $\lim_{r \to 0} r
u'''(r)=0$; the first one imposes the existence of an extremum at
the origin and the second and third ones are the actual boundary
conditions. The fourth boundary condition is technical and imposes
higher regularity at the origin. If this condition were removed
this would open the possibility of constructing functions $u(r)$
whose second derivative had a peak at the origin. This would in
turn imply the presence of a measure at the origin when
calculating the fourth derivative of such an $u(r)$, so this type
of function cannot be considered as an acceptable solution
of~(\ref{fullradial}) whenever $f(r)$ is a function. Throughout
this section we will assume $f \in L^1([0,1], r \, dr)$, that is
$f$ is an absolutely integrable function against measure $r \, dr$
on the unit interval, and we drop the tilde on $\tilde{u}$ in
order to simplify the notation.

Now we proceed to prove the existence of at least two solutions to
this boundary value problem. From now on we will employ the functional
space $\mathring{W}^{2,2}([0,1], r \, dr)$, which is the closure of
the space of radially symmetric smooth functions compactly supported
inside the unit ball of $\mathbb{R}^2$ with the norm of
$W^{2,2}([0,1], r \, dr)$. We will look for solutions to our problem
within this functional space.

\begin{lemm}
Differential equation~(\ref{fullradial}) subjected to Dirichlet
boundary conditions is the Euler-Lagrange equation of functional
\begin{equation}
\label{radialfunc} \left\{\begin{array}{rcl} J_\lambda:
&\mathring{W}^{2,2}([0,1], r \, dr)&\rightarrow \mathbb{R}
\\& u& \rightarrow J_\lambda \left(u \right) = \frac{1}{2}
\int_0^1 \left[ \left(u'' \right)^2 + \frac{\left(u'
\right)^2}{r^2} \right] r \, dr \\ & & \hspace{2 cm} + \frac{1}{6}
\int_0^1 \left(u' \right)^3 dr - \lambda \int_0^1 f \, u \, r \,
dr.
\end{array}
\right.
\end{equation}
\end{lemm}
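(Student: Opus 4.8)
The plan is to verify that the given functional $J_\lambda$ has the stated differential equation as its Euler--Lagrange equation by computing the first variation. I would start by taking a smooth test function $\varphi$ in the space $\mathring{W}^{2,2}([0,1], r\,dr)$ (so $\varphi$ is radially symmetric, smooth, and compactly supported inside the unit ball, hence $\varphi(1)=\varphi'(1)=0$ and $\varphi$ vanishes near $r=1$), and forming the Gateaux derivative
\begin{equation}
\frac{d}{d\varepsilon}\Big|_{\varepsilon=0} J_\lambda(u+\varepsilon\varphi) = 0.
\end{equation}
Differentiating each of the three integral terms under the integral sign produces
\begin{equation}
\int_0^1 \left[ u''\varphi'' + \frac{u'\varphi'}{r^2} \right] r\,dr + \frac{1}{2}\int_0^1 (u')^2\varphi'\,dr - \lambda\int_0^1 f\,\varphi\,r\,dr = 0.
\end{equation}
The core of the argument is then to integrate by parts repeatedly so as to move all derivatives off $\varphi$ and onto $u$, collecting boundary terms at $r=0$ and $r=1$.

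Next I would handle the three pieces separately. For the Willmore-type term $\int_0^1 u''\varphi'' r\,dr$, two integrations by parts are needed; for $\int_0^1 (u'\varphi'/r^2)\,r\,dr = \int_0^1 u'\varphi'/r\,dr$, one integration by parts; and for the cubic term $\tfrac12\int_0^1 (u')^2\varphi'\,dr$, one integration by parts yielding $-\tfrac12\int_0^1 \big((u')^2\big)'\varphi\,dr = -\int_0^1 u'u''\varphi\,dr$. The boundary contributions at $r=1$ vanish because $\varphi$ is compactly supported away from $r=1$; the contributions at $r=0$ must be shown to vanish using the regularity and the conditions $\tilde u'(0)=0$ and $\lim_{r\to0} r\,u'''(r)=0$ together with the factors of $r$ appearing after the substitution. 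After collecting terms, the weak form should read $\int_0^1 \big[\text{(fourth-order operator applied to }u) - u'u''/r - \lambda f\big]\varphi\, r\,dr = 0$, and one must check that the fourth-order operator that emerges is exactly $\tfrac1r\{r[\tfrac1r(ru')']'\}'$ after dividing through by the weight $r$. Since $\varphi$ is arbitrary, the fundamental lemma of the calculus of variations yields equation~(\ref{fullradial}) almost everywhere.

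The step I expect to be the main obstacle is the careful bookkeeping of the weighted integration by parts, specifically reconciling the radial operator $\tfrac1r\{r[\tfrac1r(ru')']'\}'$ with the variational terms. The quadratic part of the integrand, $(u'')^2 + (u')^2/r^2$, is not simply $(\Delta u)^2$ in disguise; the mixed structure means I must expand $[\tfrac1r(ru')']' = [u'' + u'/r]' = u''' + u''/r - u'/r^2$ and track how each piece arises from varying the two separate quadratic terms. Verifying that the boundary terms at the singular endpoint $r=0$ genuinely vanish is the delicate point, since the weight $r$ degenerates there and several terms are individually singular; this is precisely where the fourth technical condition $\lim_{r\to0} r\,u'''(r)=0$ (and the implicit regularity built into the space $\mathring{W}^{2,2}([0,1], r\,dr)$) must be invoked to discard the endpoint contributions. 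Once those boundary terms are controlled, matching coefficients to recover the left-hand side of~(\ref{fullradial}) is routine algebra.
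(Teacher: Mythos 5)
Your proposal is correct and follows essentially the same route as the paper: compute the Gateaux derivative of $J_\lambda$, integrate by parts against the weight $r\,dr$ using the boundary conditions to kill the endpoint terms, and conclude from the arbitrariness of the test function that the weak form collapses to equation~(\ref{fullradial}). The paper's proof is just a terser version of this computation (it states the result of the integrations by parts in one line), while you additionally flag the genuinely delicate points — the singular endpoint $r=0$ and the algebraic identification $(ru'')''-(u'/r)'=\{r[\tfrac1r(ru')']'\}'$ — which the paper leaves implicit.
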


\begin{proof}
We consider Euler first variation of functional~(\ref{radialfunc})
\begin{eqnarray}
\left. \frac{d}{dt} J_\lambda(u+t \phi) \right|_{t=0} &=& \\ &=&
\int_0^1 \left[ u'' \phi'' + \frac{u' \phi'}{r^2} \right] r \, dr
+ \frac{1}{2} \int_0^1 \left( u' \right)^2 \phi' dr -\lambda
\int_0^1 f \, \phi \, r \, dr \nonumber \\ \nonumber &=& \int_0^1
\left( \frac{1}{r} \left\{ r \left[ \frac{1}{r} \left( r \, u'
\right)' \right]' \right\}' - \frac{1}{r} \, u'\, u'' - \lambda f
\right) \phi \, r \, dr,
\end{eqnarray}
where the last equality is obtained by means of integration by
parts and application of the boundary conditions, and $\phi$
belongs to $\mathring{W}^{2,2}([0,1],r \, dr)$ but it is otherwise
arbitrary.
\end{proof}

The existence and multiplicity of solutions to our boundary value
problem will be obtained by searching critical points of
functional~(\ref{radialfunc}). We start proving a result
concerning the geometry of this functional.

\begin{lemm}
Functional~(\ref{radialfunc}) admits the following radial (in the
Sobolev space) lower bound:
\begin{equation}
J_\lambda (u) \ge g(||u''||_{L^2(\mu)}) \qquad \mathrm{{\it
where}} \qquad g(x)=\frac{1}{2} \, x^2 - C_1 \, x^3 - C_2 \,
\lambda \, ||f||_{L^1(\mu)} \, x,
\end{equation}
$C_1, C_2>0$ and $\mu$ stands for the radial two-dimensional
measure.
\end{lemm}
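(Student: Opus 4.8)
The plan is to bound $J_\lambda(u)$ from below by estimating each of the three integral terms in \eqref{radialfunc} in terms of $\|u''\|_{L^2(\mu)}$, where $\mu = r\,dr$ is the radial measure. The quadratic term $\tfrac12\int_0^1\bigl[(u'')^2 + (u')^2/r^2\bigr]\,r\,dr$ obviously dominates $\tfrac12\|u''\|_{L^2(\mu)}^2$, but we want the full radial Sobolev norm controlled as well; so the first step is to show that for $u \in \mathring{W}^{2,2}([0,1],r\,dr)$ the quadratic form is comparable to $\|u''\|_{L^2(\mu)}^2$. The hard part is precisely the cubic term $\tfrac16\int_0^1 (u')^3\,dr$, because it is integrated against $dr$ rather than $r\,dr$, it has no sign, and it is of higher order than the quadratic term — so I must show it can be absorbed as $-C_1\|u''\|_{L^2(\mu)}^3$.

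First I would handle the forcing term: by H\"older's inequality and the definition of the norms,
\begin{equation}
\Bigl|\lambda\int_0^1 f\,u\,r\,dr\Bigr| \le \lambda\,\|f\|_{L^1(\mu)}\,\|u\|_{L^\infty},
\end{equation}
and then control $\|u\|_{L^\infty}$ by $\|u''\|_{L^2(\mu)}$ using a radial Sobolev embedding. Since $u(1)=u'(1)=0$, one can write $u(r)$ and $u'(r)$ as integrals of $u''$ from $r$ to $1$ and estimate pointwise; this should give a Poincar\'e-type bound $\|u\|_{L^\infty}\le C\,\|u''\|_{L^2(\mu)}$, yielding the term $-C_2\,\lambda\,\|f\|_{L^1(\mu)}\,\|u''\|_{L^2(\mu)}$ with the correct linear power $x$.

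The main work is the cubic term. The plan is to estimate
\begin{equation}
\Bigl|\tfrac16\int_0^1 (u')^3\,dr\Bigr| \le \tfrac16\,\|u'\|_{L^\infty}\int_0^1 (u')^2\,dr,
\end{equation}
then bound $\|u'\|_{L^\infty}$ and $\int_0^1 (u')^2\,dr$ each by $\|u''\|_{L^2(\mu)}$. For the sup-norm of $u'$, writing $u'(r)=-\int_r^1 u''(s)\,ds$ and applying Cauchy--Schwarz against the weight $s\,ds$ gives $|u'(r)|\le \|u''\|_{L^2(\mu)}\bigl(\int_r^1 s^{-1}\,ds\bigr)^{1/2}$, which unfortunately diverges logarithmically as $r\to0$; so I expect the delicate point to be exploiting the boundary behavior at the origin (the conditions $u'(0)=0$ and $\lim_{r\to0}r\,u'''(r)=0$, together with membership in the closure of compactly-supported radial functions) to tame this. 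A cleaner route is to keep $\int_0^1(u')^2\,dr$ paired with a single factor of $\|u'\|_{L^\infty}$ and to control $\int_0^1(u')^2\,dr$ via the quadratic part of the functional itself, which already contains $\int_0^1 (u')^2 r^{-2}\,r\,dr$; this suggests the embedding constants should be extracted from the same Hardy--type inequality that makes the quadratic form coercive.

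Assembling these estimates, each term is bounded by the advertised power of $x = \|u''\|_{L^2(\mu)}$: the quadratic term gives $\tfrac12 x^2$, the cubic term gives $-C_1 x^3$, and the forcing term gives $-C_2\,\lambda\,\|f\|_{L^1(\mu)}\,x$, so that $J_\lambda(u)\ge g(x)$ with $g$ as stated. I anticipate the principal obstacle to be the logarithmic weight that appears when estimating $u'$ near the origin; resolving it cleanly will likely require invoking the specific structure of $\mathring{W}^{2,2}([0,1],r\,dr)$ as a closure of compactly supported smooth radial functions, rather than a crude pointwise integral bound, so that the relevant Sobolev and Hardy inequalities hold with finite constants.
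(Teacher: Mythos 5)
Your estimate of the forcing term is correct: writing $u$ and $u'$ as integrals of $u''$ and using Cauchy--Schwarz against the weight $s\,ds$ gives $|u'(s)|\le\|u''\|_{L^2(\mu)}\log^{1/2}(1/s)$, and since $\int_0^1\log^{1/2}(1/s)\,ds<\infty$ this yields $\|u\|_{L^\infty}\le C\|u''\|_{L^2(\mu)}$, which is all you need there (the paper instead uses the one--dimensional bound $\|u\|_{L^\infty}\le(\int_0^1(u')^2dr)^{1/2}$ followed by a weighted H\"older inequality and a two--dimensional Sobolev embedding; both routes work). The genuine gap is the cubic term. The splitting $|\int_0^1(u')^3dr|\le\|u'\|_{L^\infty}\int_0^1(u')^2dr$ commits you to the inequality $\|u'\|_{L^\infty}\le C\|u''\|_{L^2(\mu)}$, and that inequality is \emph{false}: it is exactly the critical case of the two--dimensional Sobolev embedding, and the logarithm you noticed is a real obstruction, not a technicality. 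Concretely (up to a harmless smoothing of corners), let $u'$ vanish on $[0,\delta^2]$, rise logarithmically to $\sqrt{\log(1/\delta)}$ across $[\delta^2,\delta]$ (this piece contributes exactly $1$ to $\int(u'')^2r\,dr$), equal $\sqrt{\log(1/r)}$ on $[\delta,1/2]$, and descend to $0$ on $[1/2,3/4]$; the resulting $u$ is an admissible element of $\mathring{W}^{2,2}([0,1],r\,dr)$ with $u'(0)=0$, yet $\|u'\|_{L^\infty}=\sqrt{\log(1/\delta)}$ while $\|u''\|^2_{L^2(\mu)}=\tfrac14\log\log(1/\delta)+O(1)$, so no constant $C$ can work, and no condition at the origin rescues the estimate. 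Your fallback of pairing with the Hardy part $H=\int_0^1(u')^2r^{-1}\,dr$ of the quadratic form cannot give the lemma either, for two reasons: the statement requires a lower bound depending on $\|u''\|_{L^2(\mu)}$ alone, and $H$ is not dominated by $C\|u''\|^2_{L^2(\mu)}$ on this space (a plateau $u'\equiv1$ on $[\delta,1/2]$ has $H\ge\log(1/(2\delta))$ with $\|u''\|_{L^2(\mu)}$ bounded); and quantitatively, the best available interpolation $\|u'\|^2_{L^\infty}\le2H^{1/2}\|u''\|_{L^2(\mu)}$ turns your cubic bound into $C\,H^{5/4}\|u''\|^{1/2}_{L^2(\mu)}$, which grows in $H$ faster than the $\tfrac12H$ present in the quadratic part, so it cannot be absorbed.

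The repair is to avoid $L^\infty$ altogether, and you already have the tools. Most simply, keep your own pointwise bound inside the integral: $|u'(r)|\le\|u''\|_{L^2(\mu)}\log^{1/2}(1/r)$ for all $r$, hence
\begin{equation*}
\Bigl|\int_0^1(u')^3\,dr\Bigr|\le\|u''\|^3_{L^2(\mu)}\int_0^1\log^{3/2}(1/r)\,dr=\Gamma(5/2)\,\|u''\|^3_{L^2(\mu)},
\end{equation*}
because powers of the logarithm are integrable; the singularity is fatal only if you concentrate it into a supremum. This gives $C_1=\Gamma(5/2)/6$ and, combined with your forcing estimate, the stated $g$. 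The paper's own route is different but equally subcritical: by H\"older, $\int_0^1(u')^3dr\le(\int_0^1|u'|^9r\,dr)^{1/3}(\int_0^1r^{-1/2}dr)^{2/3}$, and then $\|u'\|_{L^9(\mu)}\le C\|u''\|_{L^2(\mu)}$, obtained by applying the two--dimensional Sobolev inequality $W^{1,2}_0(B_1)\hookrightarrow L^9(B_1)$ to the radial function $w=|u'|$, which vanishes at $r=1$ and satisfies $|\nabla w|=|u''|$ a.e.; the same device with exponent $6$ handles the forcing term. Either way, the essential point your proposal misses is that in dimension two every finite exponent works and only $p=\infty$ fails, so the cubic integral must be estimated at a finite exponent (or with the logarithm kept under the integral sign), never through $\|u'\|_{L^\infty}$.
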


\begin{proof}
We have the following chain of inequalities
\begin{eqnarray}
\nonumber J_\lambda \left( u \right) \ge \frac{1}{2} \int_0^1
\left( u'' \right)^2 r \, dr + \frac{1}{2} \int_0^1
\left( u' \right)^2 dr + \frac{1}{6} \int_0^1 \left( u' \right)^3 dr - \lambda \, ||f||_{L^1(\mu)} \, ||u||_{L^\infty(\mu)} \ge \\ \nonumber \\
\nonumber \frac{1}{2} \int_0^1 \left( u'' \right)^2 r \, dr +
\frac{1}{2} \int_0^1 \left( u' \right)^2 dr + \frac{1}{6}
\int_0^1 \left( u' \right)^3 dr - C \, \lambda \, ||f||_{L^1(\mu)} \left[ \int_0^1 \left( u' \right)^2 dr \right]^{1/2} \ge \\
\nonumber \\ \nonumber \frac{1}{2} \int_0^1 \left( u'' \right)^2 r
\, dr - C_1 \left[ \int_0^1 \left( u'' \right)^2 r \, dr
\right]^{3/2} - C_2 \, \lambda \, ||f||_{L^1(\mu)} \left[ \int_0^1
\left( u'' \right)^2 r \, dr \right]^{1/2} = \\ \nonumber
\\ \frac{1}{2} \, || u''||_{L^2(\mu)}^2 - C_1 \,
|| u''||_{L^2(\mu)}^3 - C_2 \, \lambda \, ||f||_{L^1(\mu)} \, ||
u''||_{L^2(\mu)},
\end{eqnarray}
where we have used that $r \in [0,1]$ together with H\"older
inequality in the first inequality, a one-dimensional Sobolev
embedding together with the fact that $||u||_{L^\infty(\mu)} \le
||u||_{L^\infty([0,1])}$ in the second inequality, while in the
third inequality we have disregarded a non-negative quantity, we
have employed two-dimensional Sobolev embeddings and the auxiliary
inequalities
\begin{eqnarray}\nonumber
\int_0^1 \left( u' \right)^2 dr &\le& \left( \int_0^1 (u')^6 \, r \, dr  \right)^{1/3} \left( \int_0^1 \frac{dr}{\sqrt{r}} \right)^{2/3}, \\
\int_0^1 \left( u' \right)^3 dr &\le& \left( \int_0^1 |u'|^9 \, r
\, dr  \right)^{1/3} \left( \int_0^1 \frac{dr}{\sqrt{r}}
\right)^{2/3}, \nonumber
\end{eqnarray}
resulting from the application of H\"older inequality.
\end{proof}

It is clear that for $0 < \lambda < \lambda_c$ small enough the
function $g(x)$ has a negative local minimum and a positive local
maximum. It is also clear that there exist $\varphi, \psi \in
\mathring{W}^{2,2}([0,1], r \, dr)$ such that the following
properties are fulfilled:
\begin{itemize}

\item[] $$ \mathrm{a)} \qquad \int_0^1 f \, \varphi \, r \, dr >
0,
$$

\item[] $$ \mathrm{b)} \qquad \int_0^1 (\psi')^3 \, dr < 0. $$

\end{itemize}
Therefore we find $J_\lambda(s \, \varphi) <0$ for $s$ small
enough and $J_\lambda(s \, \psi) <0$ for $s$ large enough.
Consequently the geometric requirements of the {\it mountain pass}
theorem are fulfilled~\cite{ambrosetti}. Now we move to prove the
compactness requirements. We start verifying a local Palais-Smale
condition for our functional $J_\lambda$.

\begin{Defi}
We say $\{u_n\}_{n \in \mathbb{N}} \subset
\mathring{W}^{2,2}([0,1], r \, dr)$ is a Palais-Smale sequence for
$J_\lambda$ at the level $L$ if the following two properties are
fulfilled:
\begin{itemize}

\item[] $$ \mathrm{1)} \qquad J_\lambda(u_n) \to L \qquad
\mathrm{{\it when}} \qquad n \to \infty ,$$

\item[] $$ \mathrm{2)} \qquad J'_\lambda(u_n) \to 0 \qquad
\mathrm{{\it in}} \qquad \{\mathring{W}^{2,2}([0,1], r \, dr)\}^*
.$$

\end{itemize}
\end{Defi}

Now we prove the following compactness result for $J_\lambda$:

\begin{prop}\label{compactness}
Every bounded Palais-Smale sequence for $J_\lambda$ at the level
$L$ admits a strongly convergent subsequence in
$\mathring{W}^{2,2}([0,1], r \, dr)$.
\end{prop}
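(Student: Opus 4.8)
The plan is to carry out the standard weak-to-strong upgrade available in a Hilbert space, exploiting that the quadratic part of $J_\lambda$ supplies the full squared norm while the cubic and forcing terms act as compact perturbations. Denote by $\langle u,v\rangle = \int_0^1\left[u''v'' + \frac{u'v'}{r^2}\right]r\,dr$ the natural bilinear form on $\mathring{W}^{2,2}([0,1],r\,dr)$; by the Hardy--Poincar\'e type inequalities already used in the preceding lemma (controlling $\|u'/r\|_{L^2(\mu)}$ and $\|u'\|$ by $\|u''\|_{L^2(\mu)}$), this form is an inner product whose induced norm is equivalent to $\|u''\|_{L^2(\mu)}$. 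Since the Palais--Smale sequence $\{u_n\}$ is assumed bounded, after passing to a subsequence we obtain a weak limit $u_n \rightharpoonup u$ in $\mathring{W}^{2,2}([0,1],r\,dr)$; the goal is to promote this to $\|u_n-u\|\to 0$.

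First I would record the identity
\begin{equation}
J'_\lambda(u_n)[u_n-u] - J'_\lambda(u)[u_n-u] = \|u_n-u\|^2 + \frac{1}{2}\int_0^1\left[(u_n')^2 - (u')^2\right](u_n'-u')\,dr,
\end{equation}
obtained by subtracting the two first variations: the quadratic parts combine into $\langle u_n-u,u_n-u\rangle = \|u_n-u\|^2$, the two forcing contributions $-\lambda\int_0^1 f(u_n-u)\,r\,dr$ cancel identically, and the cubic terms leave the displayed remainder. The left-hand side tends to zero, the first summand because $J'_\lambda(u_n)\to 0$ in the dual while $\{u_n-u\}$ stays bounded, and the second because $J'_\lambda(u)$ is a fixed bounded linear functional evaluated along the weakly null sequence $u_n-u \rightharpoonup 0$.

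It therefore remains to show that the nonlinear remainder is negligible, and this is the crux. Here I would invoke the compactness of the two-dimensional Sobolev embeddings already exploited above: viewing $u$ as a radial function on the unit disk, $\mathring{W}^{2,2}([0,1],r\,dr)$ embeds compactly into $W^{1,9}$ of the disk, so that $u_n'\to u'$ strongly in $L^9(\mu)$. Feeding this into the auxiliary estimate $\int_0^1|u_n'-u'|^3\,dr \le \left(\int_0^1|u_n'-u'|^9\,r\,dr\right)^{1/3}\left(\int_0^1 r^{-1/2}\,dr\right)^{2/3}$ upgrades the convergence to $u_n'\to u'$ in the unweighted space $L^3([0,1],dr)$, which is exactly where the cubic term lives. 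A final Hölder application with exponents $3/2$ and $3$ bounds the remainder by $\|(u_n')^2-(u')^2\|_{L^{3/2}(dr)}\,\|u_n'-u'\|_{L^3(dr)}$, whose first factor stays bounded and whose second vanishes. Consequently the identity forces $\|u_n-u\|^2\to 0$, and the extracted subsequence converges strongly.

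The step I expect to be the genuine obstacle is the passage from weighted to unweighted integrability in the nonlinear term: the cubic functional $\frac16\int_0^1(u')^3\,dr$ is taken against Lebesgue measure, whereas the compact embeddings are naturally formulated against the degenerate radial measure $r\,dr$, which vanishes at the origin. The reconciliation hinges precisely on the integrability of $r^{-1/2}$ near $r=0$, so one must keep careful track of the weight and of the admissible Sobolev exponents there; everything else is the routine Hilbert-space machinery underlying the Palais--Smale condition.
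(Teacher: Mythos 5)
Your proof is correct, and it rests on the same pillars as the paper's (boundedness gives a weak limit; the compact two\nobreakdash-dimensional Sobolev embeddings kill the lower-order terms; the quadratic part supplies the norm), but the bookkeeping is genuinely different in two places, both to your advantage. First, the paper pairs only $J'_\lambda(u_n)$ with $u_n-u$, obtaining \eqref{pairing}, and must then add by hand the auxiliary cross-term expression \eqref{misslapl} --- which tends to zero by weak convergence and the vanishing of the boundary terms $\int_0^1 u_n''u_n'\,dr$ --- in order to reconstitute $\int_0^1|\Delta(u_n-u)|^2\,r\,dr$; you instead pair the difference $J'_\lambda(u_n)-J'_\lambda(u)$ with $u_n-u$, so the quadratic parts assemble automatically into the squared Hessian-type norm, no correction term is needed, and you land directly on the norm of $\mathring{W}^{2,2}([0,1],r\,dr)$ rather than on the Laplacian seminorm, whose equivalence with the full norm the paper leaves implicit. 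Second, the cubic term is dispatched by a different compactness property: the paper keeps it in the integrated-by-parts form $\int_0^1 u_n'u_n''(u_n-u)\,dr$ and uses the uniform convergence $u_n\to u$ (its property III), whereas you keep the weak form $\tfrac12\int_0^1[(u_n')^2-(u')^2](u_n'-u')\,dr$ and use the strong convergence $u_n'\to u'$ in $L^p([0,1],r\,dr)$ (its property II), transferred to the unweighted $L^3([0,1],dr)$ via the H\"older estimate with the integrable weight $r^{-1/2}$ --- the same auxiliary inequality the paper employs in its geometry lemma. Both mechanisms are legitimate once the subsequence is extracted, so the proofs are interchangeable; yours has the merit of making explicit the weighted-to-unweighted passage in the cubic term, which you correctly identify as the only delicate point, while the paper's version sidesteps it at the cost of the extra manipulation \eqref{misslapl}.
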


\begin{proof}
Since $\{u_n\}_{n \in \mathbb{N}} \subset
\mathring{W}^{2,2}([0,1], r \, dr)$ is bounded we find that, up to
passing to a subsequence, the following properties hold:
\begin{itemize}

\item[I.-] $u_n \rightharpoonup u$ weakly in
$\mathring{W}^{2,2}([0,1], r \, dr)$,

\item[II.-] $u'_n \to u' $ strongly in $L^p ([0,1], r \, dr)$ for
every $1 \le p < \infty$,

\item[III.-] $u_n \to u$ uniformly in $[0,1]$.

\end{itemize}
We write the convergence condition $J'_\lambda(u_n) \to 0$ in
$\{\mathring{W}^{2,2}([0,1], r \, dr)\}^*$ in the following
fashion
\begin{eqnarray}\nonumber
\frac{1}{r} \left\{ r \left[ \frac{1}{r} \left( r u_n' \right)'
\right]' \right\}' &=& \frac{1}{r} \, u_n' u_n'' + \lambda f +
w_n,
\\ \nonumber \\ \nonumber u_n \in \mathring{W}^{2,2}([0,1], r \, dr), \qquad
&w_n \to 0& \qquad \mathrm{in} \qquad \{\mathring{W}^{2,2}([0,1],
r \, dr)\}^*,
\end{eqnarray}
where the $w_n$'s are the error terms. Now we multiply this
equation by $u_n-u$ and integrate over the unit interval with the
appropriate measure to get
\begin{eqnarray}\nonumber
\int_0^1 \left[ u_n'' (u_n'' -u'') + \frac{u_n' (u_n' - u')}{r^2}
\right] r \, dr = \\ = \int_0^1 u_n' u_n'' (u_n - u) \, dr +
\lambda \int_0^1 f (u_n - u) r \, dr + \langle w_n, u_n -u
\rangle, \label{pairing}
\end{eqnarray}
after integration by parts on the first line. The three summands
on the second line converge to zero in the limit $n \to \infty$ by
the above listed properties I. (the third summand) and III. (the
first and second summands). On the other hand we have
\begin{equation}\label{misslapl}
\int_0^1 \left[ (u_n'' - u'') \frac{u_n'}{r} - (u_n'' - u'')
\left( u'' + \frac{u'}{r} \right) - \frac{u_n' - u'}{r} \right(
u'' + \frac{u'}{r} \left) +\frac{u_n' -u'}{r} u_n'' \right] r \,dr
\to 0
\end{equation}
as $n \to \infty$ due to convergence property I. and the facts
$$
\int_0^1 u_n'' u_n' \, dr = \frac{1}{2} \left. (u_n')^2
\right|_0^1 =0, \qquad \int_0^1 u'' u' \, dr = \frac{1}{2} \left.
(u')^2 \right|_0^1 =0,
$$
due to the boundary conditions. Now if we sum
expression~(\ref{misslapl}) to the first line of~(\ref{pairing})
we obtain
\begin{equation}
\int_0^1 |\Delta (u_n -u)|^2 \, r \, dr \to 0 \qquad \mathrm{as}
\qquad n \to \infty,
\end{equation}
where $\Delta = \partial_{rr} + r^{-1} \partial_r$ is the radial
Laplacian, and thus the desired conclusion.
\end{proof}

Before moving to the main result of this section we need one last
technical lemma. We introduce the cutoff function $\Upsilon$ which
is assumed to be non-increasing, smooth and given by
$\Upsilon(t)=1$ if $t \le \ell$ and $\Upsilon(t)=0$ if $t \ge
\ell^\ast$ for two given real numbers $\ell^\ast > \ell >0$.

\begin{lemm}\label{lemmj0}
The functional defined as
\begin{equation}
J^0_\lambda (u)= \frac{1}{2} \int_0^1 \left[ \left(u'' \right)^2 +
\frac{\left(u' \right)^2}{r^2} \right] r \, dr + \frac{1}{6}
\int_0^1 \left(u' \right)^3 \, \Upsilon \left( ||\Delta u ||_2
\right) dr - \lambda \int_0^1 f \, u \, r \, dr,
\end{equation}
fulfills the following properties for suitable values of $\ell$,
$\ell^\ast$ and $\lambda$:
\begin{itemize}

\item[i.-] If $||\Delta u ||_2 < \ell$ then $J_\lambda^0 =
J_\lambda$.

\item[ii.-] If $J_\lambda^0 <0$ then $||\Delta u ||_2 < \ell$.

\item[iii.-] If $\mathfrak{m} = \inf_{u \in
\mathring{W}^{2,2}([0,1],r \, dr)} J_\lambda^0 (u)$ then
$J_\lambda$ verifies a local Palais-Smale condition at the level
$\mathfrak{m}$.

\end{itemize}
\end{lemm}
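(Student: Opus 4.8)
The plan is to establish the three properties in sequence, since each one builds on the previous. For property (i.), I would simply observe that the cutoff function $\Upsilon$ is identically equal to $1$ on the set where $||\Delta u||_2 < \ell$ (by its definition as $\Upsilon(t)=1$ for $t \le \ell$). Therefore the cubic term $\frac{1}{6}\int_0^1 (u')^3 \Upsilon(||\Delta u||_2)\,dr$ in $J_\lambda^0$ coincides exactly with the cubic term $\frac{1}{6}\int_0^1 (u')^3\,dr$ in $J_\lambda$, and since the quadratic and linear terms are identical in both functionals, we get $J_\lambda^0(u) = J_\lambda(u)$ whenever $||\Delta u||_2 < \ell$. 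This is essentially immediate from the construction.

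For property (ii.), I would argue by contradiction or contrapositive using a lower-bound estimate analogous to the one established in the earlier geometry lemma. The key observation is that on the region where $||\Delta u||_2 \ge \ell$, the behavior of $J_\lambda^0$ is governed by a truncated version of the same estimate $g(x) = \frac{1}{2}x^2 - C_1 x^3 - C_2\lambda||f||_{L^1(\mu)} x$, except that on the regime $||\Delta u||_2 > \ell^\ast$ the cutoff kills the (potentially negative) cubic term entirely, leaving $J_\lambda^0 \ge \frac{1}{2}||u''||_{L^2(\mu)}^2 - C_2\lambda||f||_{L^1(\mu)}||u''||_{L^2(\mu)}$. The plan is to choose $\ell, \ell^\ast$ and $\lambda$ so that this truncated lower bound stays nonnegative for all $||\Delta u||_2 \ge \ell$; the earlier discussion already guarantees that for $0 < \lambda < \lambda_c$ small enough the function $g$ has its negative local minimum occurring at a value of $x$ below $\ell$, so that $J_\lambda^0 < 0$ can only be realized in the region $||\Delta u||_2 < \ell$. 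I would need to relate $||\Delta u||_2$ to $||u''||_{L^2(\mu)}$, which follows from the identity used at the end of the compactness proof showing these norms are comparable via the boundary conditions.

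For property (iii.), the strategy is to combine property (ii.) with the compactness result of Proposition~\ref{compactness}. A minimizing/Palais-Smale sequence $\{u_n\}$ for $J_\lambda$ at the level $\mathfrak{m}$ satisfies $J_\lambda(u_n) \to \mathfrak{m} < 0$ (the infimum is negative because we already exhibited directions along which the functional takes negative values). By property (ii.), the negativity of $J_\lambda^0$ confines such a sequence to the region $||\Delta u_n||_2 < \ell$, and by property (i.) on that region $J_\lambda$ and $J_\lambda^0$ coincide together with their derivatives. This forces the sequence to be bounded in $\mathring{W}^{2,2}([0,1], r\, dr)$, after which Proposition~\ref{compactness} supplies a strongly convergent subsequence, yielding the local Palais-Smale condition at level $\mathfrak{m}$.

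The main obstacle I anticipate is property (ii.): making the choice of $\ell$, $\ell^\ast$, and $\lambda$ simultaneously consistent so that the truncated functional is genuinely nonnegative outside the ball $||\Delta u||_2 < \ell$, while keeping $\ell$ large enough that the negative local minimum of $g$ (which produces the negative infimum needed in (iii.)) still lies inside the uncut region. The delicate point is that the cutoff acts on the cubic term, which can have either sign, so I must verify that on the transition layer $\ell \le ||\Delta u||_2 \le \ell^\ast$ the partially-cut cubic contribution cannot drive $J_\lambda^0$ negative; this requires a careful quantitative comparison between the growth of the quadratic term and the worst-case negative cubic contribution, controlled through the two-dimensional Sobolev embeddings already invoked in the geometry lemma.
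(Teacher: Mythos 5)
Your overall architecture coincides with the paper's: property (i.) is immediate from the definition of $\Upsilon$, property (ii.) comes from a truncated version of the radial lower bound of Lemma 3.2, and property (iii.) follows by combining (ii.), (i.) and Proposition~\ref{compactness}. However, the step you explicitly leave open is precisely the content of (ii.), and it is settled not by any ``careful quantitative comparison'' on the transition layer but by a concrete choice of the parameters, which your plan never makes. Let $x_0$ be the smaller positive root of $g$ and $x_m$ the location of its positive maximum (both exist for $0<\lambda<\lambda_c$), and choose $\ell=x_0$, $\ell^\ast=x_m$. Run the estimates of Lemma 3.2 in the variable $x=\|\Delta u\|_2$; this is legitimate because the boundary conditions give $\int_0^1 u''u'\,dr=0$, hence $\|\Delta u\|_2^2=\|u''\|_{L^2(\mu)}^2+\|u'/r\|_{L^2(\mu)}^2\ge\|u''\|_{L^2(\mu)}^2$, so the quadratic part of $J_\lambda^0$ equals $\tfrac12\|\Delta u\|_2^2$ while the cubic and linear parts are controlled by powers of $\|\Delta u\|_2$ (this is the norm identity you correctly point to in the compactness proof). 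One obtains
\begin{equation*}
J_\lambda^0(u)\ \ge\ h\big(\|\Delta u\|_2\big),\qquad h(x)=\tfrac12\,x^2-C_1\,x^3\,\Upsilon(x)-C_2\,\lambda\,\|f\|_{L^1(\mu)}\,x .
\end{equation*}
Since $0\le\Upsilon\le1$ and the cubic term enters with a minus sign, $h(x)\ge g(x)$ for every $x\ge0$. This single observation disposes of the transition layer you were worried about: the ``worst case'' there is $\Upsilon\equiv1$, i.e.\ $g$ itself, and $g\ge0$ on $[x_0,x_m]$ exactly by the choice of $\ell$ and $\ell^\ast$. Beyond the layer, $x>\ell^\ast$ gives $\Upsilon(x)=0$, so $h(x)=\tfrac12x^2-C_2\lambda\|f\|_{L^1(\mu)}x>0$, because $g(x_0)=0$ forces $x_0>2C_2\lambda\|f\|_{L^1(\mu)}$ and $x>x_m>x_0$. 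Hence $h\ge0$ on $[\ell,\infty)$, which is the contrapositive of (ii.); no Sobolev estimate beyond those of Lemma 3.2 is required.

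There is also a small but real logical slip in your (iii.): you start from a Palais--Smale sequence for $J_\lambda$ with $J_\lambda(u_n)\to\mathfrak{m}<0$ and then invoke ``the negativity of $J_\lambda^0$''. These are not interchangeable. Negativity of $J_\lambda$ alone does \emph{not} confine a sequence to the ball $\|\Delta u\|_2<\ell$, since the lower bound $g$ of $J_\lambda$ is also negative for large $x$; it is only the truncated functional whose negativity is confining. The argument must therefore be run through $J_\lambda^0$: take a minimizing (Ekeland) Palais--Smale sequence for $J_\lambda^0$, note that $J_\lambda^0(u_n)<0$ eventually because $\mathfrak{m}<0$, apply (ii.) to get $\|\Delta u_n\|_2<\ell$ and hence boundedness, and only then use (i.) to conclude that on this ball $J_\lambda^0$ and $J_\lambda$ coincide together with their derivatives, so that $\{u_n\}$ is a bounded Palais--Smale sequence for $J_\lambda$ at level $\mathfrak{m}$ to which Proposition~\ref{compactness} applies. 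With this ordering, and with the explicit choice of $\ell$, $\ell^\ast$ above, your outline becomes the paper's proof.
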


\begin{proof}
Property i. is obvious. For $\lambda_c >0$ small enough the lower
radial bound $g$ of $J_\lambda$ attains a maximum at a positive
level of ``energy'' for $0<\lambda<\lambda_c$. We denote as $x_0$
the smaller root of $g(x)$ and as $x_m$ the location of the
maximum. Now we choose $\ell =x_0$ and $\ell^\ast =x_m$.
Functional $J^0_\lambda$ admits the following radial lower
bound
$$
J^0_\lambda (u) \ge h(||u''||_{L^2(\mu)}) \qquad \mathrm{where}
\qquad h(x)=\frac{1}{2} \, x^2 - C_1 \, x^3 \, \Upsilon(x) - C_2
\, \lambda \, ||f||_{L^1(\mu)} \, x,
$$
where $C_1$ and $C_2$ are the same constants as in Lemma 3.2.
So this functional is bounded from below and positive for $x
> x_0$. Thus property ii. is fulfilled.

Property iii. follows from the fact that all Palais-Smale
sequences of minimizers of this functional are bounded since
$\mathfrak{m} <0$ together with an application of
Proposition~\ref{compactness}.
\end{proof}

Now we state the main result of this section:
\begin{theo}
There exists a positive real number $\lambda_c$ such that for $0 <
\lambda < \lambda_c$ Dirichlet problem~(\ref{fullradial}) has at
least two solutions.
\end{theo}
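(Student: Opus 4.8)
The plan is to produce two distinct critical points of $J_\lambda$ for $0<\lambda<\lambda_c$: a local minimizer and a mountain-pass saddle. Both will be obtained through the truncated functional $J^0_\lambda$ of Lemma~\ref{lemmj0}, which coincides with $J_\lambda$ on the region $\{\|\Delta u\|_2<\ell\}$ where we will confine all our critical points.

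First I would obtain the first solution as a \emph{global minimizer of $J^0_\lambda$}. By the radial lower bound $h$ established in Lemma~\ref{lemmj0}, the truncated functional $J^0_\lambda$ is bounded from below, so $\mathfrak{m}=\inf J^0_\lambda$ is finite; moreover, using the function $\varphi$ with $\int_0^1 f\,\varphi\,r\,dr>0$, one checks $J^0_\lambda(s\varphi)=J_\lambda(s\varphi)<0$ for $s$ small (since $\|\Delta(s\varphi)\|_2<\ell$ for small $s$), hence $\mathfrak{m}<0$. A minimizing sequence is then a bounded Palais--Smale sequence for $J^0_\lambda$, and by property iii.\ together with Proposition~\ref{compactness} it converges (up to a subsequence) to a minimizer $u_1$. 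Because $J^0_\lambda(u_1)=\mathfrak{m}<0$, property ii.\ forces $\|\Delta u_1\|_2<\ell$, and then property i.\ gives $J^0_\lambda=J_\lambda$ near $u_1$; hence $u_1$ is a genuine critical point of the original functional $J_\lambda$, i.e.\ a solution of~(\ref{fullradial}).

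Next I would extract the second solution via the \emph{mountain pass theorem} applied to $J_\lambda$ itself. The geometric hypotheses are already verified in the text: the radial bound $g$ has a negative local minimum and a strictly positive local maximum for $0<\lambda<\lambda_c$, while $J_\lambda(s\psi)<0$ for $s$ large (using the function $\psi$ with $\int_0^1(\psi')^3dr<0$). This yields a separating ``mountain ring'' at a strictly positive energy level, so the minimax value $c=\inf_{\gamma}\max_{t}J_\lambda(\gamma(t))$ satisfies $c>0>\mathfrak{m}$, which guarantees that the mountain-pass critical point $u_2$ is distinct from the minimizer $u_1$. To verify compactness at this level I would again route through $J^0_\lambda$: near the mountain-pass level the relevant functions lie in the region $\|\Delta u\|_2<\ell$ where $J_\lambda$ and $J^0_\lambda$ agree, so a Palais--Smale sequence at level $c$ is bounded and Proposition~\ref{compactness} delivers a convergent subsequence.

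The main obstacle is the \textbf{compactness/boundedness control at the mountain-pass level}. The cubic term $\tfrac16\int_0^1(u')^3\,dr$ is not sign-definite and is not controlled by the quadratic part in a coercive way, so $J_\lambda$ need only satisfy a \emph{local} Palais--Smale condition rather than a global one; this is precisely why the truncation $\Upsilon$ was introduced. The delicate point is to confirm that the minimax level $c$ and a suitable minimizing sequence of paths stay inside the region where $J^0_\lambda=J_\lambda$, so that the convergence supplied by Proposition~\ref{compactness} applies and the resulting limit is a critical point of $J_\lambda$ and not merely of the truncated functional. Once the levels $\mathfrak{m}<0<c$ are separated and both critical points are shown to live in $\{\|\Delta u\|_2<\ell\}$, the existence of (at least) two distinct solutions to the Dirichlet problem follows.
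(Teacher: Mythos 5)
Your first critical point (the negative-level minimizer obtained through the truncated functional $J^0_\lambda$, confined to $\{\|\Delta u\|_2<\ell\}$ by property ii.\ and identified with a critical point of $J_\lambda$ by property i.) is essentially the paper's argument and is fine. The gap is in the mountain-pass part: you propose to get compactness there by claiming that ``near the mountain-pass level the relevant functions lie in the region $\|\Delta u\|_2<\ell$ where $J_\lambda$ and $J^0_\lambda$ agree,'' but this claim is unjustified and cannot be salvaged. Any admissible path must join the minimizer to a point of large norm with very negative energy, so paths necessarily leave the ball $\{\|\Delta u\|_2<\ell\}$; the maximum along a path, and hence the Palais--Smale sequences produced at the minimax level $\wp>0$, have no reason to stay inside it. Worse, the truncation destroys exactly the structure you need: for $\|\Delta u\|_2\ge\ell^\ast$ the cubic term is switched off, so $J^0_\lambda$ is bounded below and coercive and has no mountain-pass geometry at all. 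You flag this as ``the delicate point'' but never resolve it, and resolving it is precisely the content of the paper's proof.

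What the paper does instead is prove \emph{directly} that the Palais--Smale sequence for $J_\lambda$ at level $\wp$ (obtained via Ekeland's variational principle) is bounded, with no confinement argument. The key is an Ambrosetti--Rabinowitz-type cancellation exploiting the homogeneity of the cubic term: since $u(1)=u'(0)=0$ gives
\begin{equation*}
-\int_0^1 u'\,u''\,u\,dr=\frac{1}{2}\int_0^1 (u')^3\,dr,
\end{equation*}
one has, for $z_n=J'_\lambda(u_n)\to 0$ in the dual space,
\begin{equation*}
\wp+o(1)=J_\lambda(u_n)-\frac{1}{3}\langle J'_\lambda(u_n),u_n\rangle+\frac{1}{3}\langle z_n,u_n\rangle
\ge \frac{1}{6}\int_0^1\left[(u_n'')^2+\frac{(u_n')^2}{r^2}\right]r\,dr
-\frac{2}{3}\,C_2\,\lambda\,\|f\|_{L^1(\mu)}\,\|u_n''\|_{L^2(\mu)}+\frac{1}{3}\langle z_n,u_n\rangle,
\end{equation*}
where the cubic contributions from $J_\lambda$ and from $\frac{1}{3}\langle J'_\lambda(u_n),u_n\rangle$ cancel exactly ($\frac{1}{6}-\frac{1}{3}\cdot\frac{1}{2}=0$). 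The right-hand side is quadratic-minus-linear in $\|u_n''\|_{L^2(\mu)}$, which forces boundedness of the sequence for $\lambda$ small; then Proposition~\ref{compactness} (which only applies to \emph{bounded} Palais--Smale sequences) yields the convergent subsequence and the mountain-pass critical point $u^{(1)}$ with $J_\lambda(u^{(1)})=\wp>0$, distinct from the minimizer. Without this (or some equivalent) boundedness argument at the level $\wp$, your proposal does not produce the second solution.
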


\begin{proof}
The functional $J_\lambda$ is well defined in
$\mathring{W}^{2,2}([0,1], r \, dr)$ as the Sobolev inequalities
immediately reveal. One of the key points of our proof is the application of
Ekeland's version of the mountain pass theorem. Our functional
fulfills the regularity required to this end, that is, continuity,
Gateaux differentiability and weak$-*$ continuity of its
derivative. We will prove the existence of two solutions to our
boundary value problem by finding two critical points of
functional $J_\lambda$, one of them is a negative local minimum
and the other one is a positive mountain pass critical point.

We start proving the existence of the local minimum at a negative
level of ``energy''. Our proof will be based on the arguments
in~\cite{gp} for solving problems with concave-convex semilinear
nonlinearities. For $\lambda_c >0$ small enough the lower radial
estimate $g$ attains a maximum at a positive level of ``energy''
for $0<\lambda<\lambda_c$. In the proof of
Lemma~\ref{lemmj0} we have shown that functional $J_\lambda^0$ is
bounded from below and positive for $x > x_0$. Accordingly,
$\mathfrak{m}$ is a negative critical value of $J^0_\lambda$, and
thus of $J_\lambda$, from where we conclude the existence of a
local minimum.

Next we move to prove the existence of a positive mountain pass
critical point. We have already proved the existence of a negative
local minimum, which will be denoted as $u^{(0)}$ from now on. We
know $J_\lambda(u^{(0)})<0$ and we know there exists $u^{(2)}$
with $\left|\left|[u^{(2)}]''\right|\right|_{L^2(\mu)}$ large
enough such that $J_\lambda(u^{(2)})<J_\lambda(u^{(0)})$. We
introduce the set of paths in the Banach space
$$
\Theta=\left\{ \left. \theta \in \mathcal{C}\left( [0,1],
\mathring{W}^{2,2}([0,1], r \, dr) \right) \right| \,
\theta(0)=u^{(0)}, \, \theta(1)=u^{(2)} \right\}.
$$
We introduce as well the value
$$
\wp = \inf_{\theta \in \Theta} \max_{s \in [0,1]}
J_\lambda[\theta(s)],
$$
and apply Ekeland's variational principle~\cite{ekeland} to prove
the existence of a Palais-Smale sequence at it. This means there
exists a sequence $\{u_n\}_{n \in \mathbb{N}} \subset
\mathring{W}^{2,2}([0,1], r \, dr)$ such that $J_\lambda(u_n) \to
\wp$ as $n \to \infty$ and $J_\lambda'(u_n) \to 0$ in $\{
\mathring{W}^{2,2}([0,1], r \, dr) \}^*$.

We must now prove that this Palais-Smale sequence is bounded. For
$u \in \mathring{W}^{2,2}([0,1], r \, dr)$ the following equality
holds
$$
-\int_0^1 u' \, u'' \, u \, dr = - \left. \frac{1}{2} (u')^2 \, u
\right|_0^1 + \frac{1}{2} \int_0^1 (u')^3 \, dr = \frac{1}{2}
\int_0^1 (u')^3 \, dr.
$$
We select $\{u_n\}_{n \in \mathbb{N}}\subset
\mathring{W}^{2,2}([0,1], r \, dr)$ Palais-Smale sequence for
$J_\lambda$ at level $\wp$ and denote $\langle z_n, u_n \rangle =
\langle J_\lambda ' (u_n),u_n \rangle$ to find
$$
\wp + o(1)= J_\lambda(u_n) -\frac{1}{3} \langle J_\lambda '
(u_n),u_n \rangle +\frac{1}{3} \langle z_n, u_n \rangle \ge
$$
$$
\frac{1}{6} \int_0^1 \left[ (u_n'')^2 + \frac{(u_n')^2}{r^2}
\right] r \, dr - \frac{2}{3} C_2 \, \lambda \, ||f||_{L^1(\mu)}
\, ||u_n''||_{L^2(\mu)} + \frac{1}{3} \langle z_n, u_n \rangle \ge
$$
$$
C \, ||u_n''||_{L^2(\mu)},
$$
for a suitable positive constant $C$, large enough $n$ and small
enough $\lambda$. In consequence the sequence is bounded in
$\mathring{W}^{2,2}([0,1], r \, dr)$.

We know, by Proposition~\ref{compactness}, that $J_\lambda$
satisfy a local Palais-Smale condition at the level $\wp$, so we
have $J_\lambda(u^{(1)})= \lim_{n \to \infty} J_\lambda (u_n)=\wp
>0$. Also, $u^{(1)}$ is a mountain pass critical point, and in consequence
$J_\lambda'(u^{(1)})=0$, so our differential equation is
fulfilled in $\mathring{W}^{2,2}([0,1], r \, dr)$.
\end{proof}

\subsection{Navier boundary conditions}

In this section we consider again problem~(\ref{fullradial}) on the
unit interval but this time subjected to Navier boundary conditions.
In the radial setting these conditions translate to $u(1)=0$ and
$u''(1)+u'(1)=0$, and we also assume the extremum condition
$u'(0)=0$ at the origin for symmetry reasons. We again assume $f
\in L^1([0,1], r \, dr)$.

As in the previous section we prove the existence of at least two
solutions to this boundary value problem. Our functional framework
will be given by the space $\hat{W}^{2,2}([0,1], r \, dr)$, which we
define as the intersection $\mathring{W}^{2,2}([0,1], r \, dr) \cap
\mathring{W}^{1,2}([0,1], r \, dr)$. We will look for solutions to
our problem belonging to this functional space and which fulfill the
boundary condition $u''(1)+u'(1)=0$. Note that, in principle, it is
not clear how this condition is fulfilled, because the second
derivatives are just square integrable. However, if we consider the
linear problem
$$
\Delta^2 u = f,
$$
$$
u=0, \quad \Delta u = 0,
$$
in $\Omega \in \mathbb{R}^2$ open, bounded and provided with a
smooth boundary, we find $u \in W^{3,p}(\Omega) \, \forall \, 1 \le
p < 2$ for $f \in L^1(\Omega)$. Consequently $u \in
W^{3-1/p,p}(\partial \Omega)$ and we can interpret this boundary
condition in the sense of traces.

In this case the solutions to the differential equation correspond
to critical points of a slightly different functional.

\begin{lemm}
Differential equation~(\ref{fullradial}) subjected to Navier
boundary conditions is the Euler-Lagrange equation of functional
\begin{equation}
\label{navierfunc} \left\{\begin{array}{rcl} I_\lambda:
&\hat{W}^{2,2}([0,1], r \, dr)&\rightarrow \mathbb{R}
\\& u& \rightarrow I_\lambda \left( u \right)= \frac{1}{2} \int_0^1 \left( u'' +
\frac{u'}{r} \right)^2 r \, dr + \frac{1}{6} \int_0^1 \left( u'
\right)^3 dr -\lambda \int_0^1 f \, u \, r \, dr.
\end{array}
\right.
\end{equation}
\end{lemm}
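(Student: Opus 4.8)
The plan is to show that the Euler–Lagrange equation of $I_\lambda$ reproduces equation~(\ref{fullradial}) by computing the first variation and integrating by parts, paying careful attention to how the Navier boundary conditions enter the boundary terms. First I would compute $\left.\frac{d}{dt} I_\lambda(u+t\phi)\right|_{t=0}$ for an arbitrary test function $\phi \in \hat{W}^{2,2}([0,1], r\,dr)$ satisfying the homogeneous analogue of the boundary conditions. The quadratic part now involves the combination $\left(u''+\frac{u'}{r}\right)$, which is precisely the radial Laplacian $\Delta u$; so the first variation of the quadratic term yields $\int_0^1 \left(u''+\frac{u'}{r}\right)\left(\phi''+\frac{\phi'}{r}\right) r\,dr$. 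The cubic term contributes $\frac{1}{2}\int_0^1 (u')^2 \phi'\,dr$ and the forcing term contributes $-\lambda \int_0^1 f\,\phi\,r\,dr$, exactly as in the Dirichlet case. The goal is to integrate by parts so that everything appears as $\int_0^1 (\cdots)\,\phi\, r\,dr$, with the bracketed expression equal to the left-hand side of~(\ref{fullradial}) minus $\frac{1}{r}u'u'' - \lambda f$.

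The key computation is the integration by parts of the quadratic term. I would rewrite $\int_0^1 \Delta u\,\Delta\phi\, r\,dr$ and integrate by parts twice in $r$, tracking all boundary contributions at $r=0$ and $r=1$. The essential observation is that the bilinear form $\int_0^1 \Delta u\,\Delta\phi\, r\,dr$ is the natural energy for the bilaplacian, and integrating it by parts produces the radial fourth-order operator $\frac{1}{r}\{r[\frac{1}{r}(ru')']'\}'$ acting on $u$, tested against $\phi$. This is the same fourth-order operator that appears in~(\ref{fullradial}), so the interior equation will match automatically. What differs from the Dirichlet case is which boundary terms survive.

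The main obstacle — and the reason the functional is \emph{slightly different} — is precisely the bookkeeping of the boundary terms. With Dirichlet conditions one imposes $u(1)=u'(1)=0$, so the boundary data kill every boundary term; the quadratic form can then be taken as $\int_0^1[(u'')^2 + (u')^2/r^2]r\,dr$. Under Navier conditions one has only $u(1)=0$ together with the natural condition $u''(1)+u'(1)=0$, i.e. $\Delta u|_{r=1}=0$. I therefore expect that writing the quadratic part as $\int_0^1 (\Delta u)^2 r\,dr$ is exactly what is needed so that the free (non-essential) boundary term arising in the integration by parts forces $\Delta u(1)=0$ to hold as a natural boundary condition, rather than being imposed a priori. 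The verification that the remaining boundary terms vanish requires using $u(1)=\phi(1)=0$, the extremum/regularity conditions $u'(0)=0$ and $\lim_{r\to 0} r u'''(r)=0$, and the condition $\phi$ inherits at the endpoints. I would check that the boundary term multiplying $\phi'(1)$ is $\big(u''(1)+u'(1)\big)\phi'(1)$, so demanding that the first variation vanish for all admissible $\phi$ yields both the interior equation~(\ref{fullradial}) and the Navier condition $u''(1)+u'(1)=0$ as the natural boundary condition. The cubic and forcing terms are handled exactly as before, since $\frac{1}{2}\int_0^1 (u')^2\phi'\,dr = -\frac{1}{2}\int_0^1[(u')^2]'\phi\,dr$ after an integration by parts whose boundary term $\frac{1}{2}(u')^2\phi\big|_0^1$ vanishes by $\phi(1)=0$ and the behaviour at the origin.
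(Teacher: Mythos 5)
Your proposal is correct and follows essentially the same route as the paper: compute the first variation (quadratic term $\int_0^1 (u''+u'/r)(\phi''+\phi'/r)\,r\,dr$, cubic term $\tfrac{1}{2}\int_0^1 (u')^2\phi'\,dr$, forcing term $-\lambda\int_0^1 f\phi\,r\,dr$), then integrate by parts to reach $\int_0^1(\cdots)\phi\,r\,dr$ with the operator of~(\ref{fullradial}). In fact your bookkeeping of the boundary terms --- $u(1)=\phi(1)=0$ as the essential condition, and $\bigl(u''(1)+u'(1)\bigr)\phi'(1)$ as the surviving term that makes $\Delta u(1)=0$ the natural boundary condition --- spells out precisely what the paper compresses into the phrase ``integration by parts and application of the boundary conditions.''
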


\begin{proof}
We consider Euler first variation of functional~(\ref{navierfunc})
\begin{eqnarray}
\left. \frac{d}{dt} I_\lambda(u+t \phi) \right|_{t=0} &=& \\ &=&
\int_0^1 \left( u'' + \frac{u'}{r} \right) \left( \phi'' +
\frac{\phi'}{r} \right) r \, dr + \frac{1}{2} \int_0^1 \left( u'
\right)^2 \phi' dr -\lambda \int_0^1 f \, \phi \, r \, dr
\nonumber
\\ \nonumber &=& \int_0^1 \left( \frac{1}{r} \left\{ r \left[
\frac{1}{r} \left( r \, u' \right)' \right]' \right\}' -
\frac{1}{r} \, u'\, u'' - \lambda f \right) \phi \, r \, dr,
\end{eqnarray}
where the last equality is obtained by means of integration by
parts and application of the boundary conditions, and $\phi$
belongs to $\hat{W}^{2,2}([0,1],r \, dr)$ but it is otherwise
arbitrary.
\end{proof}

Now we prove a result concerning the geometry of $I_\lambda$.
First we note that both $J_\lambda$ and $I_\lambda$ are well
defined in $W^{2,2}([0,1],r \, dr)$, the space of all functions
$u:[0,1] \longrightarrow \mathbb{R}$ whose second derivative
($u''$) and first derivative normalized by the independent
variable ($u'/r$) are square integrable on the unit interval
against measure $r \, dr$, as can be seen by means of a direct
application of the Sobolev inequalities.

\begin{lemm}
Let $u \in W^{2,2}([0,1],r \, dr)$. Then $I_\lambda(u) \ge
J_\lambda(u)$.
\end{lemm}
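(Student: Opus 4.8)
The plan is to exploit the fact that $I_\lambda$ and $J_\lambda$ differ only in their quadratic parts, so that the whole inequality collapses to the sign of a single boundary term. The cubic term $\frac{1}{6}\int_0^1 (u')^3\,dr$ and the forcing term $\lambda\int_0^1 f\,u\,r\,dr$ are literally the same in both functionals, so
$$
I_\lambda(u)-J_\lambda(u)=\frac{1}{2}\int_0^1\left[\left(u''+\frac{u'}{r}\right)^2-(u'')^2-\frac{(u')^2}{r^2}\right]r\,dr.
$$
First I would expand the square in the integrand: the $(u'')^2$ and $(u')^2/r^2$ contributions cancel against the subtracted terms, and only the cross term $2u''u'/r$ survives. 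After multiplying by the weight $r$ and integrating, this leaves the clean identity $I_\lambda(u)-J_\lambda(u)=\int_0^1 u'\,u''\,dr$.

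Next I would recognize the integrand as $\frac{1}{2}\big((u')^2\big)'$, so that formally $I_\lambda(u)-J_\lambda(u)=\frac{1}{2}\big[(u')^2\big]_0^1=\frac{1}{2}\big((u'(1))^2-(u'(0))^2\big)$. To make this rigorous I would note that $u''\in L^2([0,1],r\,dr)$ implies $u''\in L^1([r_0,1])$ for every $r_0>0$ (the weight is bounded below there), hence $u'\in AC_{loc}((0,1])$ and the integration by parts is legitimate on each $[r,1]$. The endpoint value $u'(1)$ is well defined because near $r=1$ the problem is a standard one--dimensional one and the embedding $W^{1,2}\hookrightarrow C^0$ applies.

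The crux is then to show that the boundary contribution at the origin vanishes, i.e. that $u'(0)=0$, after which $I_\lambda(u)-J_\lambda(u)=\frac{1}{2}(u'(1))^2\ge 0$ and we are done. Here I would use that membership in $W^{2,2}([0,1],r\,dr)$ forces $\int_0^1 (u')^2/r\,dr<\infty$. Writing $g=u'$ and splitting $gg'=(g/\sqrt r)\,(g'\sqrt r)$, the Cauchy--Schwarz inequality with the complementary weights $r^{-1}$ and $r$ gives
$$
\int_0^1|g\,g'|\,dr=\int_0^1\frac{|g|}{\sqrt r}\,|g'|\sqrt r\,dr\le\left(\int_0^1\frac{g^2}{r}\,dr\right)^{1/2}\left(\int_0^1(g')^2 r\,dr\right)^{1/2}<\infty .
$$
Consequently $\int_r^1 2gg'\,d\tau$ converges as $r\to0$, so $(u'(r))^2=(u'(1))^2-\int_r^1 2gg'\,d\tau$ has a finite limit $\ell\ge0$. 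If $\ell>0$ then $(u')^2/r\sim \ell/r$ near the origin, contradicting the finiteness of $\int_0^1(u')^2/r\,dr$; hence $\ell=0$, that is $u'(0)=0$.

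I expect the only genuine obstacle to be this last point, namely extracting the vanishing boundary value of $u'$ at the singular endpoint $r=0$ directly from the weighted norm rather than from any a priori continuity of $u'$ there; the Hardy--type argument above is what makes it work. Everything preceding it is an algebraic identity followed by a routine integration by parts on $(0,1]$.
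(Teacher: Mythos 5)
Your proof is correct and follows the same route as the paper's: expand the square, observe that the cubic and forcing terms cancel, and reduce the difference $I_\lambda(u)-J_\lambda(u)$ to the boundary term $\frac{1}{2}\left[(u')^2\right]_0^1=\frac{1}{2}(u'(1))^2-\frac{1}{2}(u'(0))^2$. The one point where you go beyond the paper is the treatment of the endpoint $r=0$: the paper simply asserts ``because $u'(0)=0$,'' which for an arbitrary element of $W^{2,2}([0,1],r\,dr)$ is not an explicit hypothesis, whereas you derive it from the space membership itself --- the finiteness of $\int_0^1 (u')^2/r\,dr$ together with the Cauchy--Schwarz estimate $\int_0^1|u'u''|\,dr\le \left(\int_0^1 (u')^2/r\,dr\right)^{1/2}\left(\int_0^1 (u'')^2 r\,dr\right)^{1/2}$ forces the limit of $(u'(r))^2$ to exist and to vanish, since a positive limit would make $(u')^2/r$ non-integrable. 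This Hardy-type argument supplies precisely the justification the paper's one-line claim leaves implicit, so your write-up is, if anything, the more complete of the two.
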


\begin{proof}
We want to prove
\begin{equation}
I_\lambda \left( u \right) \ge \frac{1}{2} \int_0^1 \left[ \left(
u'' \right)^2 + \frac{\left(u'\right)^2}{r^2} \right] r \, dr +
\frac{1}{6} \int_0^1 \left( u' \right)^3 dr -\lambda \int_0^1 f \,
u \, r \, dr.
\end{equation}
This follows from
\begin{equation}
\int_0^1 \left( u'' + \frac{u'}{r} \right)^2 r \, dr = \int_0^1
\left[ \left( u'' \right)^2 + \frac{\left( u'\right)^2}{r^2} + 2
u'' \, \frac{u'}{r} \right] r \, dr,
\end{equation}
and
\begin{equation}
\int_0^1 \left( u'' \, \frac{u'}{r} \right) r \, dr = \int_0^1 u''
\, u' \, dr = \left. \frac{1}{2} \left( u' \right)^2 \right|_0^1
\ge 0,
\end{equation}
because $u'(0)=0$.
\end{proof}

\begin{obs}
Note that this result implies that
the geometry of $I_\lambda$ corresponds to the same mountain pass
shape of $J_\lambda$.
\end{obs}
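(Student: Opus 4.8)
The plan is to verify, one by one, the three geometric ingredients that were used to set up the mountain pass argument for $J_\lambda$, and to show that each of them transfers to $I_\lambda$ by virtue of the inequality $I_\lambda(u)\ge J_\lambda(u)$ together with the fact that the two functionals share \emph{identical} cubic and forcing terms. Recall that for $J_\lambda$ the mountain pass shape rested on: (a) the radial lower bound $g$ of Lemma 3.2, which for $0<\lambda<\lambda_c$ displays a negative local minimum near the origin and a positive local maximum (the barrier); (b) a direction $\varphi$ with $\int_0^1 f\,\varphi\,r\,dr>0$ producing a negative valley near the origin; and (c) a direction $\psi$ with $\int_0^1(\psi')^3\,dr<0$ along which the functional descends below that level, indeed to $-\infty$.

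First I would transfer the radial lower bound. Since $\hat{W}^{2,2}([0,1],r\,dr)\subseteq W^{2,2}([0,1],r\,dr)$, the inequality $I_\lambda(u)\ge J_\lambda(u)$ holds on the whole Navier space, and composing it with the estimate of Lemma 3.2 gives $I_\lambda(u)\ge g(\|u''\|_{L^2(\mu)})$. Hence $I_\lambda$ admits the \emph{same} lower bound $g$ and therefore inherits, for the same range $0<\lambda<\lambda_c$, the negative local minimum and the positive-energy barrier at $x_m$. This step is immediate precisely because the inequality points in the favourable direction for a lower estimate.

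Next I would produce the two negative valleys. Here I would exploit the computation in the proof of the preceding lemma, which shows that the difference of the two functionals is exactly $I_\lambda(u)-J_\lambda(u)=\tfrac12\,(u'(1))^2$ (using $u'(0)=0$), so that $I_\lambda$ and $J_\lambda$ coincide on $\mathring{W}^{2,2}([0,1],r\,dr)$, where the trace $u'(1)$ vanishes. Choosing the same $\varphi,\psi\in\mathring{W}^{2,2}([0,1],r\,dr)\subseteq\hat{W}^{2,2}([0,1],r\,dr)$ as in the $J_\lambda$ analysis, one then has $I_\lambda(s\varphi)=J_\lambda(s\varphi)<0$ for $s$ small and $I_\lambda(s\psi)=J_\lambda(s\psi)<0$ for $s$ large, so both valleys persist. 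Combining the barrier from the previous step with these two valleys reproduces exactly the mountain pass shape of $J_\lambda$.

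The only genuine subtlety—and the point I would state most carefully—is the direction of the inequality $I_\lambda\ge J_\lambda$: since $I_\lambda$ dominates $J_\lambda$, the existence of points where $I_\lambda<0$ is a priori \emph{harder} to guarantee than for $J_\lambda$ and cannot be read off from the inequality alone. The resolution is that the distinguishing term $\tfrac12(u'(1))^2$ is nonnegative (hence harmless for the lower bound) and vanishes on $\mathring{W}^{2,2}([0,1],r\,dr)$; alternatively, along any fixed ray it is only $O(s^2)$, hence subleading against the linear forcing term as $s\to0$ and against the negative cubic term as $s\to\infty$, so that the signs of $I_\lambda(s\varphi)$ and $I_\lambda(s\psi)$ in the two limiting regimes are dictated entirely by the shared terms. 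Verifying this is the step where I would expect to need the most care.
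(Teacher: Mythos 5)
Your treatment of the two ``valleys'' is the strongest part of the proposal, and it is in fact more careful than the paper itself, which offers no argument beyond invoking the inequality $I_\lambda\ge J_\lambda$. You correctly observe that this inequality points the \emph{wrong} way for producing points where $I_\lambda<0$, and your repair is valid: the proof of the preceding lemma gives the exact identity $I_\lambda(u)-J_\lambda(u)=\tfrac12\,(u'(1))^2$ (using $u'(0)=0$), so the two functionals coincide on the Dirichlet subspace $\mathring{W}^{2,2}([0,1],r\,dr)\subset\hat{W}^{2,2}([0,1],r\,dr)$, and the directions $\varphi,\psi$ from the Dirichlet analysis can be reused verbatim; the alternative observation that the extra term is $O(s^2)$ along a ray, hence subleading against the linear term as $s\to0$ and the cubic term as $s\to\infty$, is also correct.

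The gap is in your first step, the transfer of the barrier. Lemma 3.2 is stated and proved only on $\mathring{W}^{2,2}([0,1],r\,dr)$, yet you apply its estimate to $u\in\hat{W}^{2,2}([0,1],r\,dr)$. This is not a formality: the proof of Lemma 3.2 controls the cubic and forcing terms through embeddings of the form $\|u'\|_{L^6(\mu)},\,\|u'\|_{L^9(\mu)}\le C\,\|u''\|_{L^2(\mu)}$, i.e.\ Sobolev--Poincar\'e inequalities for $v=u'$ with zero trace at $r=1$ --- precisely the Dirichlet condition $u'(1)=0$ that Navier functions do not satisfy. On $\hat{W}^{2,2}$ these inequalities genuinely fail: take $v_n(r)=0$ for $r\le e^{-n}$ and $v_n(r)=1+\tfrac{1}{n}\log r$ for $e^{-n}\le r\le1$, and let $u_n$ satisfy $u_n'=v_n$, $u_n(1)=0$; then $u_n\in\hat{W}^{2,2}$, $\|u_n''\|_{L^2(\mu)}^2=1/n\to0$, while $\|u_n'\|_{L^p(\mu)}$ stays bounded away from zero. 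So ``composing with the estimate of Lemma 3.2'' is not licit, and a bound purely in terms of $\|u''\|_{L^2(\mu)}$ is the wrong target on this space. The repair is to rerun the Lemma 3.2 argument directly for $I_\lambda$ on $\hat{W}^{2,2}$ with the radial variable $x=\|\Delta u\|_{L^2(\mu)}$, noting that on this space $\|\Delta u\|_{L^2(\mu)}^2=\|u''\|_{L^2(\mu)}^2+\|u'/r\|_{L^2(\mu)}^2+(u'(1))^2$, and using the full-norm two-dimensional embedding $W^{1,2}(\mu)\hookrightarrow L^p(\mu)$ (no boundary values needed) together with $\|u'\|_{L^2(\mu)}\le\|u'/r\|_{L^2(\mu)}$; this yields $I_\lambda(u)\ge\tfrac12 x^2-C_1x^3-C_2\,\lambda\,\|f\|_{L^1(\mu)}\,x$, i.e.\ the barrier in the norm natural to the Navier space. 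With that substitution, your two valleys (which live in the Dirichlet subspace, where both norms agree) complete the mountain pass geometry and the remark follows.
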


In the following we will prove the existence of at least two
solutions in this case too. The proofs run in parallel to those of
the previous section, so we will simply adapt the arguments and
write exclusively those parts in which the differences are
explicit.

\begin{prop}\label{propnavier}
Every bounded Palais-Smale sequence for $I_\lambda$ at the level
$L$ admits a strongly convergent subsequence in
$\hat{W}^{2,2}([0,1], r \, dr)$.
\end{prop}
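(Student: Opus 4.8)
The plan is to mirror the argument of Proposition~\ref{compactness}, exploiting the fact that the quadratic part of $I_\lambda$ is by construction exactly $\tfrac12\int_0^1(u''+u'/r)^2\,r\,dr=\tfrac12\|\Delta u\|_{L^2(\mu)}^2$, so that the radial Laplacian appears directly and none of the algebraic reconstruction in the spirit of~(\ref{misslapl}) is required. First I would note that boundedness of the Palais--Smale sequence $\{u_n\}$ in $\hat W^{2,2}([0,1],r\,dr)$ yields, after passing to a subsequence, the same three compactness properties used before: $u_n\rightharpoonup u$ weakly in $\hat W^{2,2}$, $u_n'\to u'$ strongly in $L^p([0,1],r\,dr)$ for every $1\le p<\infty$, and $u_n\to u$ uniformly on $[0,1]$. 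These follow from the radial Sobolev embeddings exactly as in the Dirichlet case, once one observes that a bound on $\|\Delta u_n\|_{L^2(\mu)}$ controls $\|u_n''\|_{L^2(\mu)}$ and $\|u_n'/r\|_{L^2(\mu)}$ separately, which I justify at the end.

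Next I would write the condition $I'_\lambda(u_n)\to 0$ in weak form and test it against $\phi=u_n-u$, obtaining
\[
\int_0^1 \Delta u_n\,\Delta(u_n-u)\,r\,dr
=-\tfrac12\int_0^1 (u_n')^2(u_n-u)'\,dr
+\lambda\int_0^1 f\,(u_n-u)\,r\,dr
+\langle w_n,u_n-u\rangle,
\]
with $w_n\to 0$ in $\{\hat W^{2,2}([0,1],r\,dr)\}^*$. Each term on the right tends to zero: the forcing term by uniform convergence together with $f\in L^1(\mu)$; the pairing term because $w_n\to0$ and $\{u_n-u\}$ is bounded; and the cubic term after an integration by parts, where the boundary contributions vanish because $u_n'(0)=0$ and $(u_n-u)(1)=0$, leaving $\int_0^1 u_n'u_n''(u_n-u)\,dr$, which is handled by the strong convergence of $u_n'$ and the uniform convergence of $u_n$, precisely as the corresponding summands of~(\ref{pairing}) were treated.

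For the left-hand side I would split
\[
\int_0^1 \Delta u_n\,\Delta(u_n-u)\,r\,dr
=\|\Delta(u_n-u)\|_{L^2(\mu)}^2
+\int_0^1 \Delta u\,\Delta(u_n-u)\,r\,dr,
\]
and observe that the last integral tends to zero because the weak convergence of $\{u_n\}$ gives $\Delta u_n\rightharpoonup\Delta u$ in $L^2(\mu)$ while $\Delta u$ is fixed. Combining the two displays yields $\|\Delta(u_n-u)\|_{L^2(\mu)}\to 0$. To convert this into convergence in the full norm I would invoke the identity already used to prove $I_\lambda\ge J_\lambda$: since the extremum condition $v'(0)=0$ holds for $v=u_n-u$,
\[
\|\Delta v\|_{L^2(\mu)}^2=\|v''\|_{L^2(\mu)}^2+\|v'/r\|_{L^2(\mu)}^2+\bigl(v'(1)\bigr)^2\ge \|v''\|_{L^2(\mu)}^2+\|v'/r\|_{L^2(\mu)}^2,
\]
so that $\|\Delta(u_n-u)\|_{L^2(\mu)}\to0$ forces both $\|(u_n-u)''\|_{L^2(\mu)}\to0$ and $\|(u_n-u)'/r\|_{L^2(\mu)}\to0$; together with $u_n\to u$ uniformly this gives strong convergence in $\hat W^{2,2}([0,1],r\,dr)$.

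The main obstacle I anticipate is the same delicate point hidden in~(\ref{pairing}): controlling $\int_0^1 u_n'u_n''(u_n-u)\,dr$, since the second derivatives are square integrable only against the degenerate weight $r\,dr$ whereas this integral is taken against $dr$. One must verify that $\int_0^1|u_n'\,u_n''|\,dr$ stays bounded, via H\"older against the weight and the established $L^p$ bounds on $u_n'$, so that the uniform smallness of $u_n-u$ closes the estimate. Everything else is a routine transcription of the Dirichlet argument, made \emph{simpler} here because the Laplacian structure is built into $I_\lambda$ from the outset.
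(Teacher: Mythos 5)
Your proof is correct and follows essentially the same route as the paper: the same three compactness properties for the bounded Palais--Smale sequence, testing $I_\lambda'(u_n)$ against $u_n-u$, and subtracting the weakly convergent term $\int_0^1 \Delta u\,\Delta(u_n-u)\,r\,dr$ to isolate $\|\Delta(u_n-u)\|_{L^2(\mu)}^2$. The only differences are cosmetic: you start from the variational form rather than the strong equation (the integration by parts of the cubic term reproduces exactly the paper's right-hand side), and you make explicit two points the paper leaves implicit, namely the weighted H\"older bound $\int_0^1|u_n'\,u_n''|\,dr\le\|u_n'/r\|_{L^2(\mu)}\,\|u_n''\|_{L^2(\mu)}$ and the identity $\|\Delta v\|_{L^2(\mu)}^2=\|v''\|_{L^2(\mu)}^2+\|v'/r\|_{L^2(\mu)}^2+\bigl(v'(1)\bigr)^2$ showing that control of the Laplacian yields convergence in the full norm.
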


\begin{proof}
Since $\{u_n\}_{n \in \mathbb{N}} \subset \hat{W}^{2,2}([0,1], r
\, dr)$ is bounded we find that, up to passing to a subsequence,
the following properties hold:
\begin{itemize}

\item[I.-] $u_n \rightharpoonup u$ weakly in $\hat{W}^{2,2}([0,1],
r \, dr)$,

\item[II.-] $u'_n \to u' $ strongly in $L^p ([0,1], r \, dr)$ for
every $1 \le p < \infty$,

\item[III.-] $u_n \to u$ uniformly in $[0,1]$.

\end{itemize}
We write the convergence condition $I'_\lambda(u_n) \to 0$ in
$\{\hat{W}^{2,2}([0,1], r \, dr)\}^*$ in the following fashion
\begin{eqnarray}\nonumber
\frac{1}{r} \left\{ r \left[ \frac{1}{r} \left( r u_n' \right)'
\right]' \right\}' &=& \frac{1}{r} \, u_n' u_n'' + \lambda f +
w_n,
\\ \nonumber \\ \nonumber u_n \in \hat{W}^{2,2}([0,1], r \, dr), \qquad
&w_n \to 0& \qquad \mathrm{in} \qquad \{\hat{W}^{2,2}([0,1], r \,
dr)\}^*,
\end{eqnarray}
where the $w_n$'s are the error terms. Now we multiply this
equation by $u_n-u$ and integrate over the unit interval with the
appropriate measure to get
\begin{eqnarray}\nonumber
\int_0^1 \left\{ \left( u_n'' + \frac{u_n'}{r} \right) \left[ (u_n
-u)'' + \frac{(u_n - u)'}{r} \right] \right\} r \, dr =
\\ \label{convnav1} = \int_0^1 u_n' u_n'' (u_n - u) \, dr + \lambda \int_0^1 f (u_n
- u) r \, dr + \langle w_n, u_n -u \rangle,
\end{eqnarray}
after integration by parts on the first line. The three summands
on the second line converge to zero in the limit $n \to \infty$ by
the above listed properties I. (the third summand) and III. (the
first and second summands). On the other hand we have
\begin{equation}\label{convnav2}
\int_0^1 \left\{ \left( u'' + \frac{u'}{r} \right) \left[ (u_n
-u)'' + \frac{(u_n - u)'}{r} \right] \right\} r \,dr \to 0
\end{equation}
as $n \to \infty$ due to convergence property I.

Now if we subtract expression~(\ref{convnav2}) from the first line
of~(\ref{convnav1}) we obtain
\begin{equation}
\int_0^1 |\Delta (u_n -u)|^2 \, r \, dr \to 0 \qquad \mathrm{as}
\qquad n \to \infty,
\end{equation}
where $\Delta = \partial_{rr} + r^{-1} \partial_r$ is the radial
Laplacian, and thus the desired conclusion.
\end{proof}

\begin{theo}
There exist a positive real number $\lambda_c$ such that for $0 <
\lambda < \lambda_c$ the Navier problem for~(\ref{fullradial}) has
at least two solutions.
\end{theo}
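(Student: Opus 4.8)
The plan is to follow the proof of the Dirichlet theorem line by line, replacing $J_\lambda$ by $I_\lambda$ and $\mathring{W}^{2,2}([0,1], r\, dr)$ by $\hat{W}^{2,2}([0,1], r\, dr)$, and invoking Proposition~\ref{propnavier} in place of Proposition~\ref{compactness} wherever compactness of bounded Palais--Smale sequences is needed. First I would record that $I_\lambda$ is well defined on $\hat{W}^{2,2}([0,1], r\, dr)$ and enjoys the regularity demanded by Ekeland's version of the mountain pass theorem (continuity, Gateaux differentiability, weak$-*$ continuity of its derivative), which the Sobolev inequalities already quoted supply at once. I would then establish the mountain pass geometry for $I_\lambda$: by the comparison Lemma $I_\lambda(u) \ge J_\lambda(u)$, the radial lower bound $J_\lambda(u) \ge g(\|u''\|_{L^2(\mu)})$ is inherited by $I_\lambda$, so the lower estimate of $I_\lambda$ again attains a maximum at a positive level of energy for $0 < \lambda < \lambda_c$. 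Since the cubic term $\frac{1}{6}\int_0^1 (u')^3\, dr$ and the linear term $\lambda\int_0^1 f\, u\, r\, dr$ are identical in both functionals, the very same test functions $\varphi, \psi$ obeying (a) and (b) give $I_\lambda(s\varphi) < 0$ for $s$ small and $I_\lambda(s\psi) < 0$ for $s$ large; this is the content of the preceding Remark.

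Next I would produce the negative local minimum. Introducing the truncated functional $I_\lambda^0$ obtained from $I_\lambda$ by inserting the cutoff $\Upsilon(\|\Delta u\|_2)$ in front of the cubic term, the comparison $I_\lambda \ge J_\lambda$ transfers the bound $I_\lambda^0(u) \ge h(\|u''\|_{L^2(\mu)})$ with the same $h$, $C_1$, $C_2$, $\ell = x_0$, $\ell^\ast = x_m$ as in Lemma~\ref{lemmj0}. Hence $I_\lambda^0$ is bounded below, is positive for $\|u''\|_{L^2(\mu)} > x_0$, and its infimum $\mathfrak{m}$ is negative; because this infimum is attained in the region where $I_\lambda^0 = I_\lambda$, it furnishes a negative local minimum $u^{(0)}$ of $I_\lambda$, the requisite local Palais--Smale condition at $\mathfrak{m}$ following from Proposition~\ref{propnavier} exactly as in Lemma~\ref{lemmj0}.

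For the positive critical point I would run the minimax argument unchanged. Choosing $u^{(2)}$ with $\|[u^{(2)}]''\|_{L^2(\mu)}$ large enough that $I_\lambda(u^{(2)}) < I_\lambda(u^{(0)})$, I set $\Theta$ to be the paths joining $u^{(0)}$ to $u^{(2)}$ in $\hat{W}^{2,2}([0,1], r\, dr)$ and $\wp = \inf_{\theta\in\Theta}\max_{s\in[0,1]} I_\lambda[\theta(s)] > 0$, and apply Ekeland's variational principle to obtain a Palais--Smale sequence $\{u_n\}$ at the level $\wp$. To see it is bounded I would use the identity $-\int_0^1 u'\, u''\, u\, dr = \frac{1}{2}\int_0^1 (u')^3\, dr$, which remains valid on $\hat{W}^{2,2}([0,1], r\, dr)$ since $u(1)=0$ and $u'(0)=0$ annihilate the boundary contributions, and then form the combination $I_\lambda(u_n) - \frac{1}{3}\langle I_\lambda'(u_n), u_n\rangle$. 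A direct computation shows the cubic term cancels and leaves $\frac{1}{6}\int_0^1 (u_n'' + u_n'/r)^2\, r\, dr$ minus a term bounded by $\frac{2}{3}C_2\,\lambda\,\|f\|_{L^1(\mu)}\,\|u_n''\|_{L^2(\mu)}$; using $I_\lambda(u_n)=\wp+o(1)$ and $\langle I_\lambda'(u_n),u_n\rangle = o(\|u_n''\|_{L^2(\mu)})$, together with $\|\Delta u_n\|_{L^2(\mu)}^2 \ge \|u_n''\|_{L^2(\mu)}^2$, the quadratic growth dominates for small $\lambda$ and forces $\|u_n''\|_{L^2(\mu)}$ to stay bounded. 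Proposition~\ref{propnavier} then delivers a strongly convergent subsequence whose limit $u^{(1)}$ is a mountain pass critical point with $I_\lambda(u^{(1)}) = \wp > 0$; since $I_\lambda(u^{(0)}) < 0 < I_\lambda(u^{(1)})$, the two solutions are distinct.

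The main obstacle I anticipate is not the variational scheme, which is a faithful copy of the Dirichlet argument, but the bookkeeping forced by the modified quadratic form: one must consistently track $\|\Delta u\|_{L^2(\mu)}^2 = \int_0^1 (u'' + u'/r)^2 r\, dr$ against the split norm $\|u''\|_{L^2(\mu)}^2 + \|u'/r\|_{L^2(\mu)}^2$ appearing in $g$ and $h$, whose difference is the nonnegative boundary term $\frac{1}{2}(u'(1))^2$ generated by the Navier conditions, and verify that this discrepancy is harmless throughout the lower bounds, the boundedness estimate, and the cubic cancellation. Separately, and more delicately, I would have to confirm that the critical points of $I_\lambda$ genuinely satisfy the Navier boundary condition $u''(1) + u'(1) = 0$ in the trace sense discussed before the lemma, rather than merely being formal Euler--Lagrange solutions.
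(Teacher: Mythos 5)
Your proposal is correct and takes essentially the same route as the paper: the paper likewise transfers the mountain pass geometry and the truncated-functional minimum argument verbatim from the Dirichlet case (indeed, it simply declares the proof of the minimum ``identical'' and omits it), and then re-derives only the boundedness of the Palais--Smale sequence via the identity $-\int_0^1 u'\,u''\,u\,dr = \tfrac{1}{2}\int_0^1 (u')^3\,dr$ and the combination $I_\lambda(u_n)-\tfrac{1}{3}\langle I_\lambda'(u_n),u_n\rangle$, before invoking Proposition~\ref{propnavier}. Your version is in fact more explicit than the paper's about the points it glosses over, namely the transfer of the lower bounds through the comparison $I_\lambda \ge J_\lambda$ and the trace interpretation of the natural boundary condition $u''(1)+u'(1)=0$.
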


\begin{proof}
The functional $I_\lambda$ is well defined in
$\hat{W}^{2,2}([0,1], r \, dr)$ as the Sobolev inequalities
immediately reveal. As in the previous section, we will prove the
existence of two solutions to our boundary value problem by
finding two critical points of functional $I_\lambda$, one of them
is a negative local minimum and the other one is a positive
mountain pass critical point. The proof of existence of the
minimum is identical in both cases, so it will not be reproduced
herein.

So we concentrate in proving the existence of the positive
mountain pass critical point. We employ the same minimax technique
as in the previous section and the existence of a Palais-Smale
sequence $\{u_n\}_{n \in \mathbb{N}} \subset \hat{W}^{2,2}([0,1],
r \, dr)$ such that $J_\lambda(u_n) \to \wp$ and $J_\lambda'(u_n)
\to 0$ as $n \to \infty$ in $\{ \hat{W}^{2,2}([0,1], r \, dr)
\}^*$, where $\wp$ is the critical mountain pass level.

We must now prove that this Palais-Smale sequence is bounded. For
$u \in \hat{W}^{2,2}([0,1], r \, dr)$ the following equality holds
$$
-\int_0^1 u' \, u'' \, u \, dr = - \left. \frac{1}{2} (u')^2 \, u
\right|_0^1 + \frac{1}{2} \int_0^1 (u')^3 \, dr = \frac{1}{2}
\int_0^1 (u')^3 \, dr.
$$
We select $\{u_n\}_{n \in \mathbb{N}}\subset \hat{W}^{2,2}([0,1],
r \, dr)$ Palais-Smale sequence for $I_\lambda$ at level $\wp$ and
denote $\langle z_n, u_n \rangle = \langle I_\lambda ' (u_n),u_n
\rangle$ to find
$$
\wp + o(1)= I_\lambda(u_n) -\frac{1}{3} \langle I_\lambda '
(u_n),u_n \rangle +\frac{1}{3} \langle z_n, u_n \rangle \ge
$$
$$
\frac{1}{6} \int_0^1 \left( u_n'' + \frac{u_n'}{r} \right)^2 r \,
dr - \frac{2}{3} C_2 \, \lambda \, ||f||_{L^1(\mu)} \,
||u_n''||_{L^2(\mu)} + \frac{1}{3} \langle z_n, u_n \rangle \ge
$$
$$
C \, ||u_n''||_{L^2(\mu)},
$$
for a suitable positive constant $C$, large enough $n$ and small
enough $\lambda$. In consequence the sequence is bounded in
$\hat{W}^{2,2}([0,1], r \, dr)$.

We know, by Proposition~\ref{propnavier}, that $I_\lambda$
satisfies a local Palais-Smale condition at the level $\wp$, so we
have $I_\lambda(u_*)= \lim_{n \to \infty} I_\lambda (u_n)=\wp >0$.
Also $u_*$ is a mountain pass critical point, so $I_\lambda'(u_*)=0$
and our differential equation is
fulfilled in $\hat{W}^{2,2}([0,1], r \, dr)$.
\end{proof}

\section{Numerical results}

So far we have proven the existence of at least two solutions to
both Dirichlet and Navier problems. In this section we will
clarify the nature of these solutions by means of numerically
solving the boundary value problems employing a shooting method.
Our first step will be transforming differential
equation~\eqref{fullradial} into a form more suitable for the
numerical treatment. To this end and from now on we will assume
$f(r) \equiv 1$.

Integrating once equation \eqref{fullradial} against measure $r \,
dr$ and using boundary condition $\lim_{r \to 0} r u'''(r)=0$
yields
\begin{equation}
r \left[ \frac{1}{r} \left( r \tilde{u}' \right)' \right]' =
\frac{1}{2} (\tilde{u}')^2 + \frac{1}{2} \lambda r^2.
\end{equation}
By changing variables $w=r u'$ we find the equation
\begin{equation}\label{numerics}
w'' -\frac{1}{r} \, w' = \frac{1}{2} \, \frac{w^2}{r^2} +
\frac{1}{2} \, \lambda \, r^2.
\end{equation}

We have performed some numerical simulations with the final value
problem for this ordinary differential equation using a
fourth-order Runge-Kutta method. We have employed the final
conditions $w(1)=0$ and $w'(1)$ arbitrary, which correspond to
Dirichlet boundary conditions, to check how big $\lambda$ could be
in order to have solutions. We have solved this problem for $r \in
[0,1]$ and we have looked for solutions such that $\lim_{\epsilon
\to 0^{+}} \, w(\epsilon)/\epsilon=0$, which corresponds to the
extremum condition $u'=0$ for the original differential equation.
The results of the simulations are represented in
figure~\ref{shooting1}. One observes that for $\lambda=0$ there
are one trivial and one non-trivial solutions. For $0 < \lambda <
\lambda_c$ there are two non-trivial solutions which approach each
other for increasing $\lambda$. In particular, the smaller of
these solutions corresponds to a minimum of the ``energy''
functional and the larger solution corresponds to a mountain pass
critical point. In all the calculated cases the minimum solution
is strictly smaller than the mountain pass solution for all $0 \le
r <1$. For $\lambda > \lambda_c$ no more solutions were
numerically found. The critical value of $\lambda$ was numerically
estimated to be $\lambda_c \approx 169$, and it is achieved when
both critical points merge. These numerical experiments suggest no
solutions exist for large enough $\lambda$.

\begin{figure}
\centering \subfigure[]{
\includegraphics[width=0.45\textwidth]{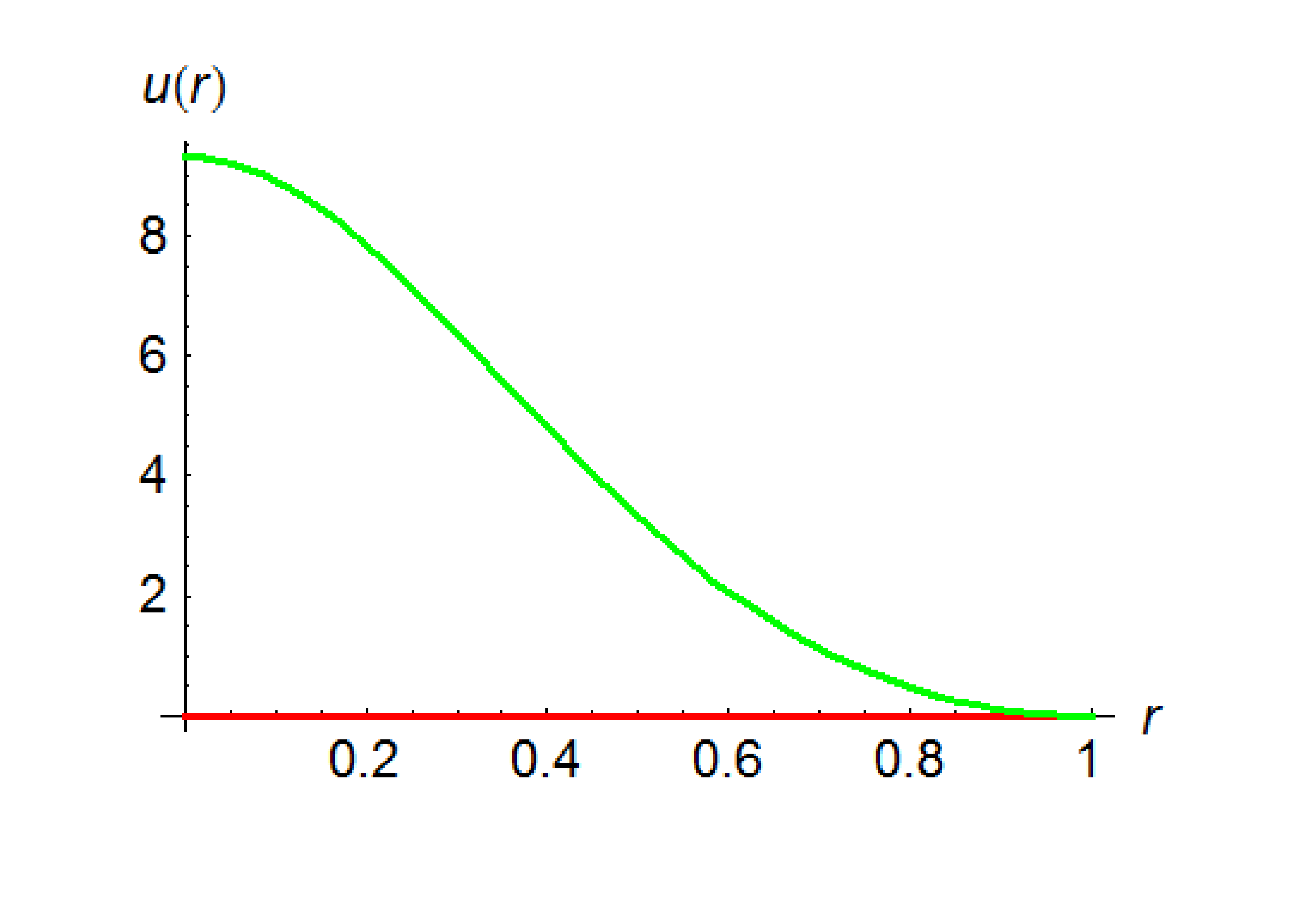}
\label{dirichlet0}} \subfigure[]{
\includegraphics[width=0.45\textwidth]{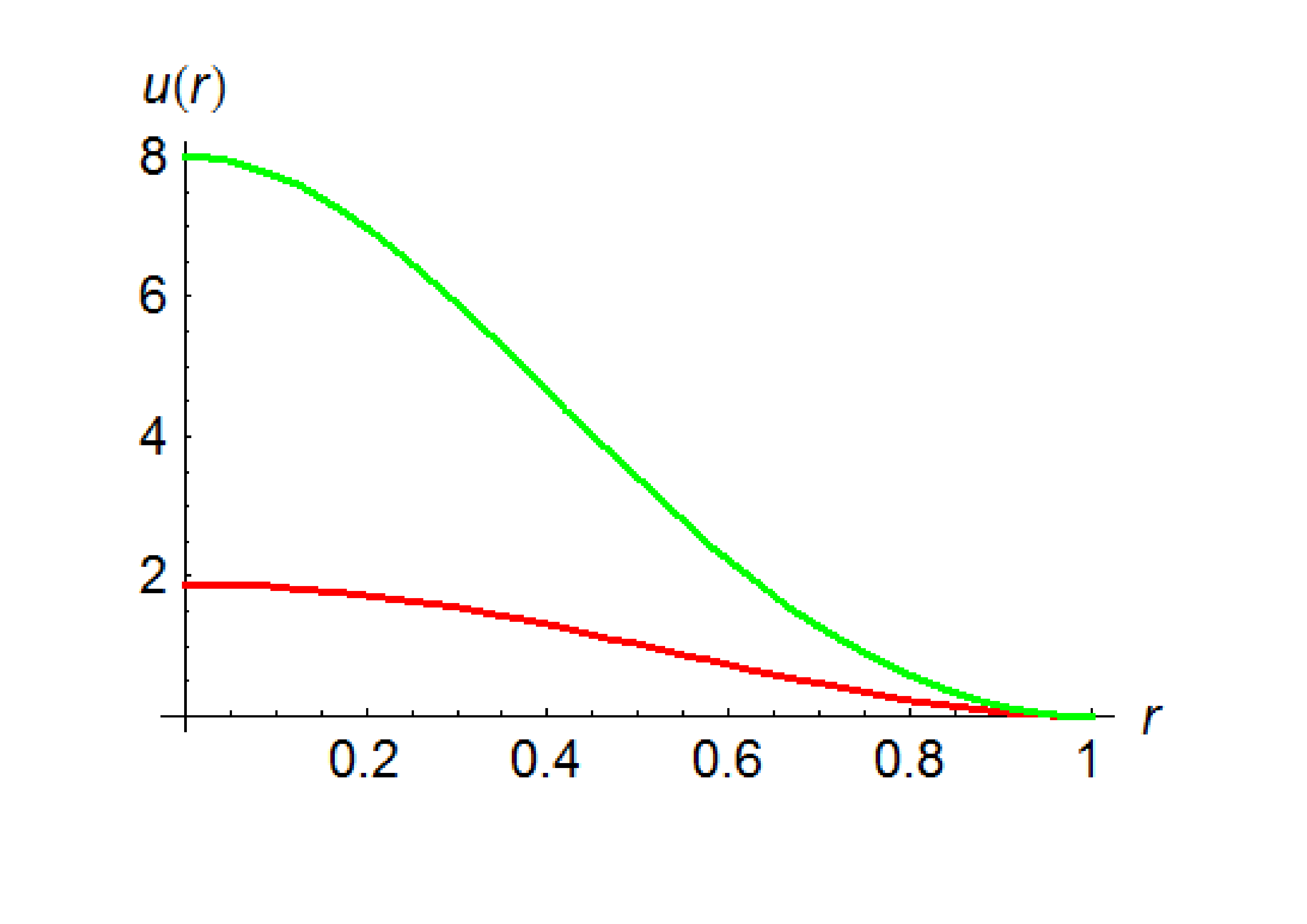}
\label{dirichlet100}}
\\
\subfigure[]{
\includegraphics[width=0.45\textwidth]{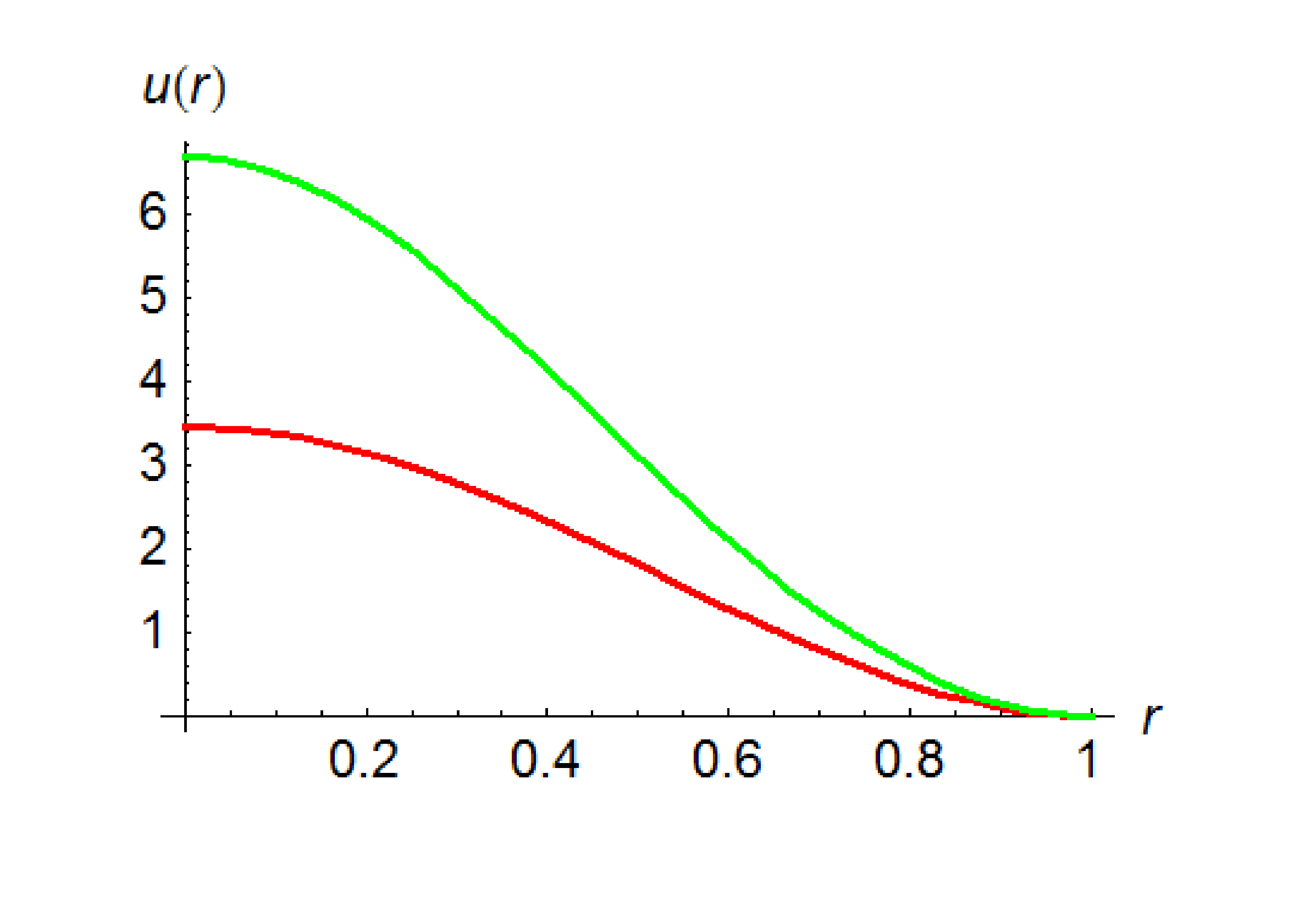}
\label{dirichlet150}} \subfigure[]{
\includegraphics[width=0.45\textwidth]{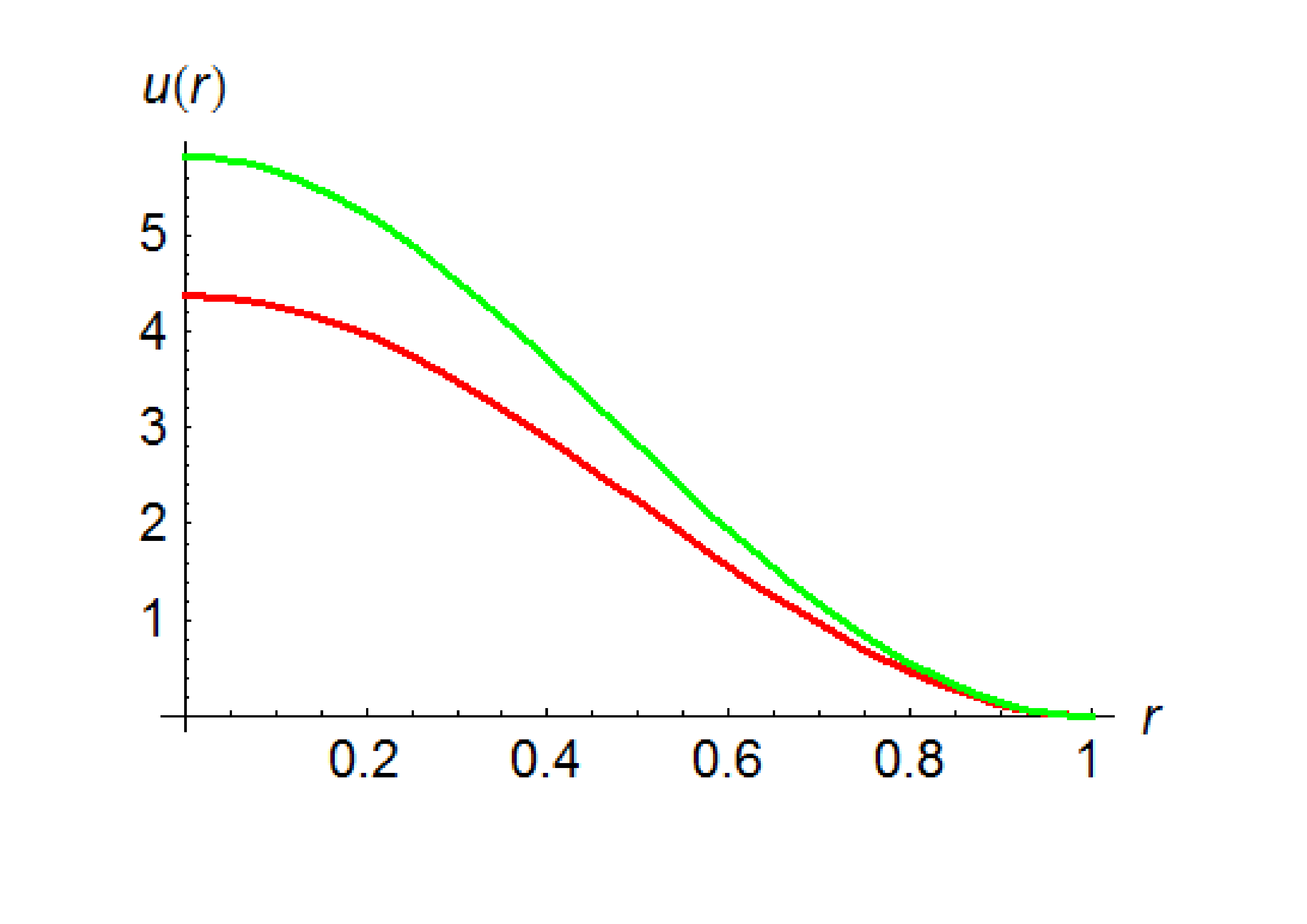}
\label{dirichlet165} } \caption{Radial solutions corresponding to
Dirichlet boundary conditions calculated as explained in the text.
Red line: minimum solution. Green line: mountain pass solution.
Panel \subref{dirichlet0}: $\lambda=0$. Panel
\subref{dirichlet100}: $\lambda=100$. Panel \subref{dirichlet150}:
$\lambda=150$. Panel \subref{dirichlet165}: $\lambda=165$.}
\label{shooting1}
\end{figure}

Now we move back to differential equation~(\ref{fullradial}) but
this time subjected to homogeneous Navier boundary conditions. We
start as above, with the equation
\begin{equation}
w'' -\frac{1}{r} \, w' = \frac{1}{2} \, \frac{w^2}{r^2} +
\frac{1}{2} \, \lambda \, r^2,
\end{equation}
and $w(1)=w'(1)$ arbitrary, what corresponds to homogeneous Navier
boundary conditions.

Also in this case we have employed a  fourth-order Runge-Kutta
method. The results of the numerical experiments are plotted in
figure~\ref{shooting2}. They run in parallel to the results of the
Dirichlet case. We have considered the Navier problem again for $r
\in [0,1]$ and we have searched for solutions such that
$\lim_{\epsilon \to 0^{+}} \, w(\epsilon)/\epsilon=0$, which
corresponds to the extremum condition $u'=0$ for the original
differential equation. Using this shooting method we have found two
different solutions which fulfill these requirements. One observes
that for $\lambda=0$ there are one trivial and one non-trivial
solutions. For $0 < \lambda < \lambda_c$ there are two non-trivial
solutions which approach each other for increasing $\lambda$. For
$\lambda > \lambda_c$ no more solutions were numerically found. The
critical value of $\lambda$ was numerically estimated to be
$\lambda_c \approx 11.34$. Again, the smaller solution corresponds
to a minimum of the ``energy'' functional and the larger solution
corresponds to a mountain pass critical point. In all cases the
minimum solution is strictly smaller than the mountain pass solution
for all $0 \le r <1$.

\begin{figure}
\centering \subfigure[]{
\includegraphics[width=0.45\textwidth]{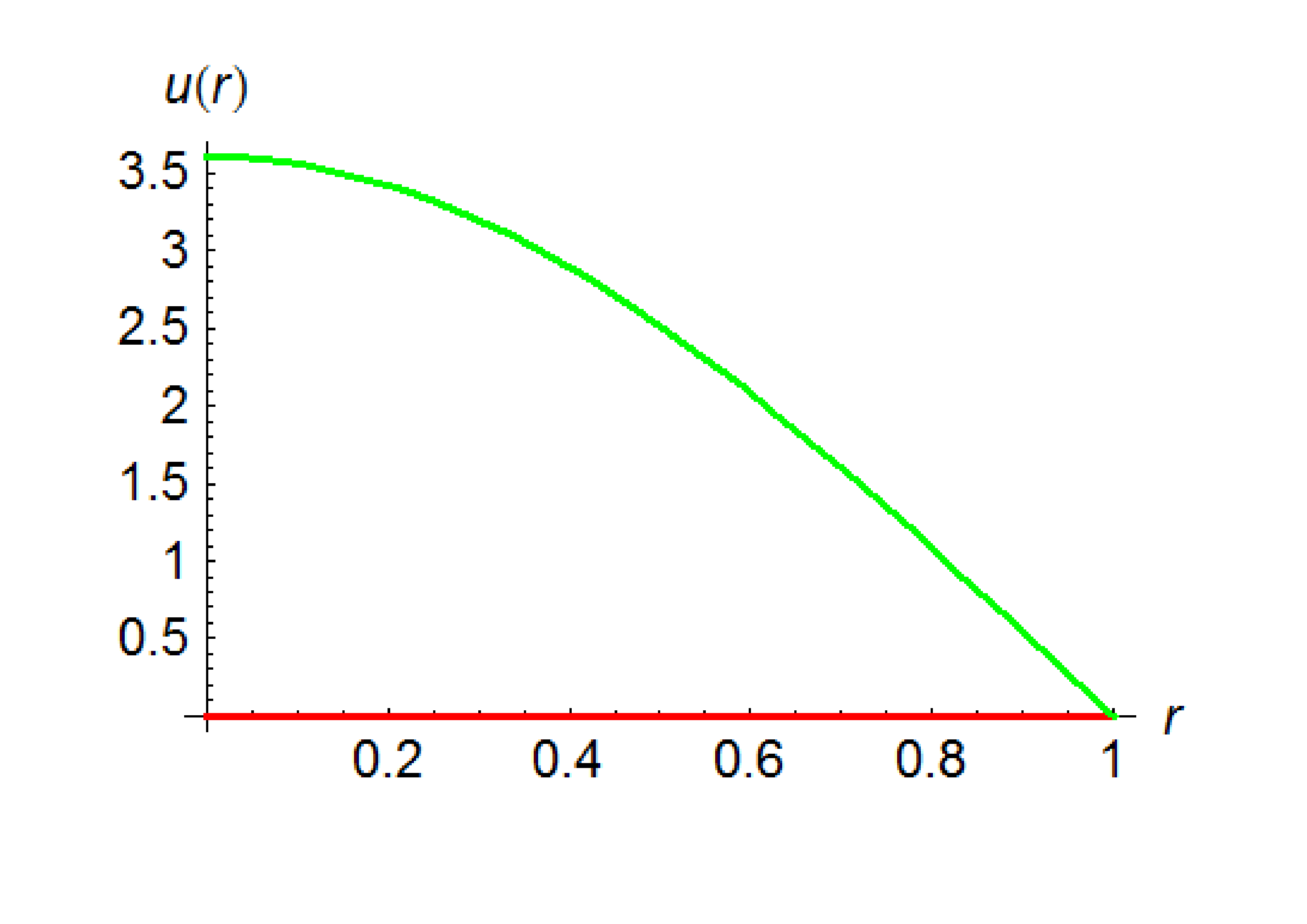}
\label{navier0}} \subfigure[]{
\includegraphics[width=0.45\textwidth]{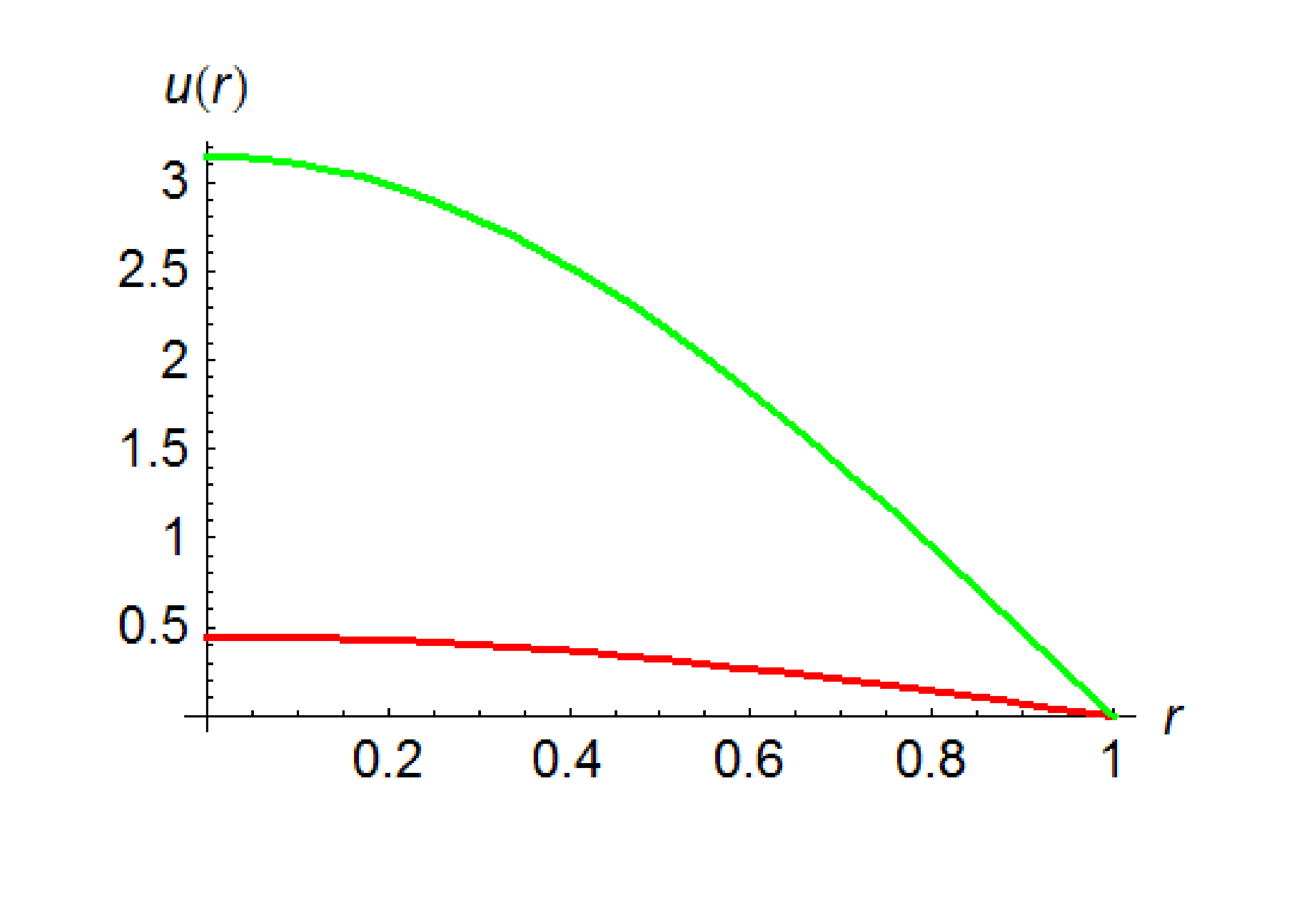}
\label{navier5}} \\ \subfigure[]{
\includegraphics[width=0.45\textwidth]{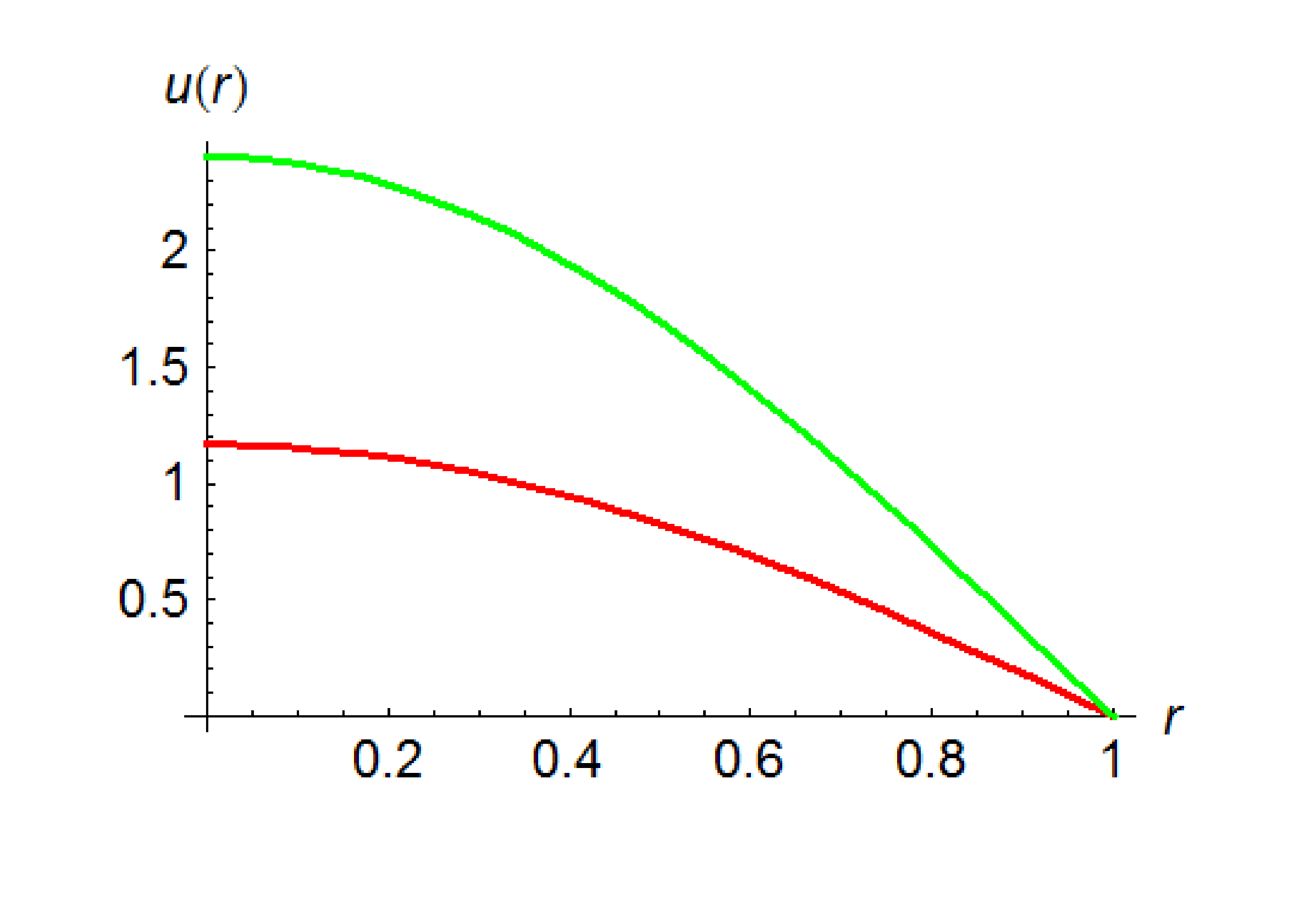}
\label{navier10}} \subfigure[]{
\includegraphics[width=0.45\textwidth]{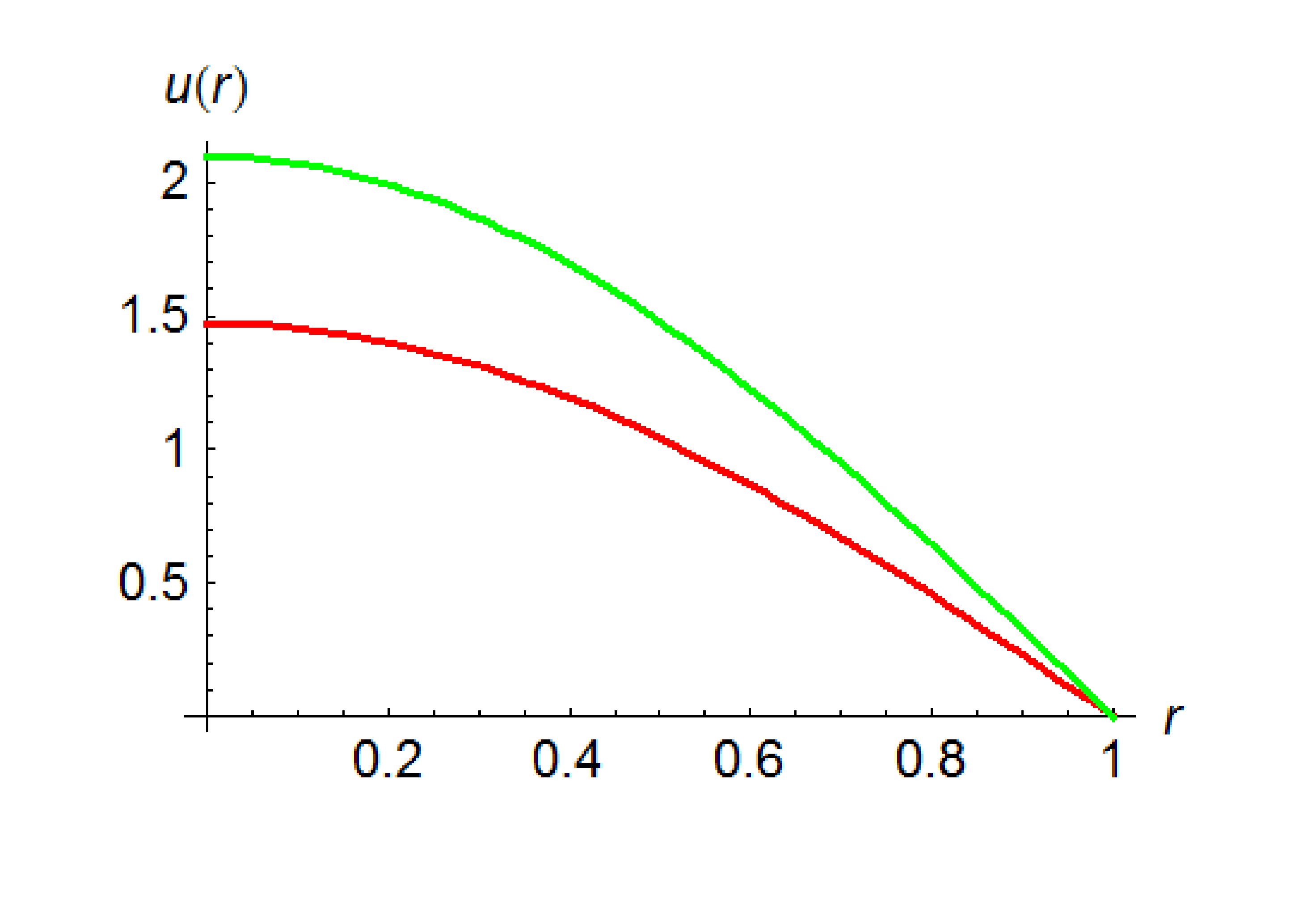}
\label{navier11} } \caption{Radial solutions corresponding to
Navier boundary conditions calculated as explained in the text.
Red line: minimum solution. Green line: mountain pass solution.
Panel \subref{navier0}: $\lambda=0$. Panel \subref{navier5}:
$\lambda=5$. Panel \subref{navier10}: $\lambda=10$. Panel
\subref{navier11}: $\lambda=11$.} \label{shooting2}
\end{figure}

\section{Conclusions and Outlook}

We have analyzed a differential equation appearing in the physical
theory of epitaxial growth. We have started formally introducing
the corresponding partial differential equation and then we have
focused on radial solutions to its stationary counterpart.
The resulting equation has been posed in the unit disk in the plane
subjected to two different sets of boundary conditions. We have
proven the existence of at least two solutions to both boundary
value problems for small enough data.
In each problem we have observed both solutions numerically and
identified one of them with the local minimum of our ``energy''
functional and the other one with a mountain pass critical point.
Due to the qualitatively similar results in both cases, the following
assertions, and in particular the conjectures, refer to both
boundary value problems.
Our numerical simulations have revealed that the solutions are
ordered in the sense that the one corresponding to the minimum
lies strictly below (except for the boundary point $r=1$) the one
corresponding to the mountain pass critical point. We have found
the mountain pass solution is nontrivial for $0 \le \lambda <
\lambda_c$ and the minimum solution is nontrivial for $0 < \lambda
< \lambda_c$ and trivial for $\lambda=0$. We have also proven
nonexistence of solutions for large values of this parameter and we have found
rigorous bounds for the size of the data separating existence from
nonexistence, but the proofs will be reported elsewhere~\cite{preprint}.

We conjecture the solution corresponding to the minimum is
dynamically stable: if we considered the full evolution problem we
would find this solution is locally stable for it. We also
conjecture the mountain pass solution is dynamically unstable. We
have numerically observed both solutions become closer for
$\lambda$ approaching the critical value separating existence from
nonexistence, so we conjecture that the transition from existence
to nonexistence as we vary the parameter $\lambda$ is a
saddle-node bifurcation for the corresponding evolution problem.
We finally conjecture there exists a unique solution, that is
dynamically unstable, for the critical value of $\lambda$,
precisely the one that corresponds to the bifurcation threshold.

On the physical side, our results can be interpreted within the
theory of nonequilibrium potentials~\cite{wio}. The evolution
problems correspond to gradient flows pursuing the minimization of
our ``energy'' functionals, that play the role of nonequilibrium
potentials. If both forcing term and initial condition are small
the system will evolve towards the equilibrium state. If the
forcing were stochastic the equilibrium state would become
metastable. For a large forcing term there are no equilibrium
states, so the system will keep on evolving forever in a genuine
nonequilibrium fashion. In the theory of nonequilibrium growth, in
which the forcing is normally assumed stochastic, it is known that
these features affect both morphology and dynamics of the evolving
interface~\cite{barabasi}. In the case of existence of a local
minimum this would imply in turn the existence of transient
behavior, as found in different models of epitaxial
growth~\cite{vvedensky}. Nonexistence of this state would mean
that the asymptotic state is rapidly achieved. Residence times
could be estimated with the help of the theory of nonequilibrium
potentials~\cite{wio}. Our results constitute a first step towards
the understanding of these phenomena, although more work is needed
in order to get a full understanding of them.

\end{document}